\newtheorem{theorem}{Theorem}[section]
\newtheorem{lemma}[theorem]{Lemma}
\theoremstyle{definition}
\newtheorem{example}[theorem]{Example}
\newtheorem{remark}[theorem]{Remark}
\newtheorem{hypothesis}[theorem]{Assumption}
\theoremstyle{remark}
\numberwithin{equation}{section}
\newcommand{\al}{\alpha}
\newcommand{\de}{\delta}
\newcommand{\auskommentieren}[1]{}
\newcommand{\beq}{\begin{equation}}
\newcommand{\eeq}{\end{equation}}
\newcommand{\bea}{\begin{equation}\begin{aligned}}
\newcommand{\eea}{\end{aligned}\end{equation}}
\newcommand{\Mcc}{\mathcal{M}}
\renewcommand{\deg}{deg}
\newcommand{\red}[1]{\textcolor{black}{#1}}
\newcommand{\dd}{{\rm d}}
\newcommand{\norm}[2]{\left\lVert#1\right\rVert_{#2}}
\DeclareMathOperator{\dist}{dist}
\DeclareMathOperator{\dive}{div}
\DeclareMathOperator{\diam}{\operatorname{diam} }
\DeclareMathOperator{\id}{id}
\DeclarePairedDelimiterX\set[1]\lbrace\rbrace{\def\given{\;\delimsize\vert\;}#1}
\begin{document}

\title[Error estimates for finite elements for geometric equations]{\red{A priori error estimates for finite element approximations of regularized level set flows in higher norms}}

\author{A. Kr\"oner}
\curraddr{}
\address{Weierstrass Institute for Applied Analysis and Stochastics (WIAS), Mohrenstr. 39, 10117 Berlin, Germany}
\email{axel.kroener@wias-berlin.de}

\author{H. Kr\"oner}
\address{Universit\"at Duisburg-Essen, Fakult\"at f\"ur  Mathematik, Thea-Leymann-Stra\ss e 9, 45127 Essen, Germany}
\email{heiko.kroener@uni-due.de}

\subjclass[2010]{35J66, 35J93, 65N15, 65N30}

\date{January 1, 2001 and, in revised form, June 22, 2001.}


\keywords{level set flow; elliptic regularization; finite elements; rates of convergence}

\begin{abstract} 
\if{In the context of geometric evolution equations a common method is to use
a level set function to model the evolution of surfaces in the three dimensional Euclidean space.  While $C^0$ is a natural 
regularity in the context of viscosity solutions in special cases higher regularity is available and even necessary in order to detect level sets from the level set function in the geometrically non-pathological case of smooth surfaces as level sets.  Correspondingly, for a finite element scheme approximating the level set equation one would like to also have error estimates which usually come naturally in Sobolev norms at least in $W^{2,\mu}$, $\mu$ appropriately, which then imply via embedding theorems 
an error estimate in $C^1$ and hence give meaningful information about the approximation error of the level set surface itself.}\fi
\red{This paper proves error estimates for $H^2$ conforming finite elements for equations which model the flow of surfaces by different powers of the mean curvature (this includes mean curvature flow).
The scheme is based on a known regularization procedure and produces different kinds of errors, a regularization error, a finite element discretization error for the regularized problems and a full error. While in the literature and own previous work different aspects of the aforementioned error types are treated, here, we solely and for the first time focus on the finite element discretization error in the $W^{2,\mu}$ norm for the regularized equation analyzing also the dependencies from the regularization parameter.} \end{abstract}

\maketitle

\tableofcontents

\section{Introduction}\label{sec::main_result}
\if{The purpose of our paper is abstractly said to shed light on the theoretical background of a numerical scheme which is already used in the literature. Since our specific type of problem has not received that much attention before we keep the structure of the introduction of rather longer and expository  type in order to explain the problem, (own) previous work and to motivate our solution.
}\fi
 \if{This paper extends specific aspects of the  finite element error analysis for regularized geometric evolution equations considered in the work  \cite{HK} of the second author. Before we go into the details of our problem we give a very short introduction to geometric  evolution equations for the non-specialist in this field.}\fi
  \red{This paper treats a specific problem concerning the numerical analysis of a geometric evolution equation.   Geometric evolution equations
  are in a certain sense a geometric analog of the heat equation but appear already in their simplest shape  as a nonlinear system of PDE. In order to illustrate this we let $x:I \times \Omega\rightarrow \mathbb{R}^3$, $\Omega \times  I\subset \mathbb{R}^2\times \mathbb{R}$ open, $I$ interval, $x=x(t, \xi)$, be a smooth family of immersions $x(t, \cdot)$. 
 We consider the homogeneous heat equation with  
 the Laplace Beltrami operator $\Delta_{\xi}$ instead of the usual Laplacian and the unknown function replaced by $x$, yielding
 \begin{align}
 \frac{\dd}{\dd t}x(t, \xi) = \Delta_{\xi} x(t, \xi)
 \end{align}
 which is the defining equation of mean curvature flow  in the parametric formulation.
Herein, we may write  $\Delta_{\xi} x = -H \nu$ with $H$ the mean curvature  and $\nu$ the outward directed normal vector of the surface parameterized by $x(t, \cdot)$, according to the Gauss formula from differential geometry. We remark that mean curvature flow can also be interpreted as the gradient flow for the area functional. Under mean curvature flow a smooth, convex initial hypersurface shrinks in finite time to a point and that singularity looks round `under the microscope', i.e. after rescaling,  see \cite{MR1216584}. 
There are variants of mean curvature flow in the literature at which $-H\nu$ is replaced in the right-hand side above by $(1/H)\nu$ (inverse mean curvature flow) or also $-H^k\nu$, $k\ge 1$, (flow by a power of the mean curvature). We refer here to the introductions of \cite{MR1916951} and \cite{MR2420018} for a good overview over these variants and will discuss aspects and applications of these flows more detailed in the sequel.
}

\red{
The parametric formulation has limitations as one can already see from the example of mean curvature flow. When the initial surface is not convex, e.g. a shape of a dumbbell, it might happen that the bar of the dumbbell vanishes somewhere so that the surface breaks into two (non-empty) pieces. Such phenomena can be seen in the parametric formulation only before such topological changes occur which themselves are known only from suitably generalized formulations.
Examples are the level set formulation \cite{MR1100206, MR1374010,MR1100211}, a framework to describe mean curvature flow in the sense of geometric measure theory \cite{Brakke}, and a formulation as a limit of phase field models \cite{AllenCahn:1979}. }

In the present paper we only study a level set formulation which in turn allows topological changes of the evolving level set.
Thereby we extend specific aspects of the  finite element error analysis for regularized geometric evolution equations in level set form considered in   \cite{HK} by the second author. 
In  \cite{HK}  error analysis for a finite element approximation  for the following family of  equations (which is here slightly more general than in that reference) is considered:
\begin{equation} \label{unified_equation}
\begin{cases}
\begin{aligned}
\dive \left(\frac{\nabla u^{\varepsilon}}
{\sqrt{|\nabla u^{\varepsilon}|^2+\varepsilon^2}}\right)-
\eta \left(\sqrt{ |\nabla u^{\varepsilon}|^2+\varepsilon^2 }\right)&=  0 && \text{in } \Omega_{\alpha}, \\
u^{\varepsilon} &= f_{\alpha}  && \text{on } \partial \Omega_{\alpha}
\end{aligned}
\end{cases}
\end{equation}
with 
\begin{equation}
\eta(r):= \sigma r^{\alpha}.
\end{equation}
This family of equations was introduced in \cite{MR1916951} and \cite{MR2420018} in order to approximate weak solutions of inverse mean curvature flow and the flow by (powers of) mean curvature. The appearing quantities will be explained in the next section. 

\subsection{More details about the Equation \eqref{unified_equation}} 
Let $\alpha \in \mathbb{R}$ and
assume that $\Omega_{\alpha} \subset \mathbb{R}^3$ is an open, bounded domain with a smooth boundary
$\partial \Omega_{\alpha}$.

If $\alpha>0$ we assume that we can rewrite $\partial \Omega_{\alpha}=\partial A \cup \partial B_R(0)$. Here $A$ is an 
open, bounded set containing the origin with $\partial A$ having positive mean curvature and playing the role of the initial hypersurface. Note that the sign convention is so that the mean curvature is taken with respect to the outer normal vector so that a ball has positive mean curvature (with respect to its outer normal vector). Hence $\partial A$ as subset of $\partial \Omega_{\alpha}$ has negative mean curvature. $B_R(0)$ is a large ball of radius $R>0$ around the origin 
  so that $R$ is large compared to $\diam A$. 
Let $f_{\alpha}=0$ on $\partial A$ and $f_{\alpha}=L$ on $\partial B_{R}(0)$ where $L$ is a large constant. In case $\varepsilon=0$ we have the following: Around a point $p \in \Omega_{\alpha}$ at which the level set of $u^{\varepsilon}$  (here it even suffices that $u^{\varepsilon}$ solves the equation without prescribing boundary values) is  $C^2$ with non-vanishing gradient the level set evolves in a neighborhood of  $p$ in  the direction of its outer normal with speed given by $1/H^{1/\alpha}$. The evolving surface at time $t$ is defined here as the $t$ level set of the level set function. This flow is an expanding flow and moves outward.
 
 If $\alpha<0$ we let $\partial \Omega_{\alpha}$ have positive mean curvature and  $\partial \Omega_{\alpha}$ will play the role of the initial hypersurface of the curvature flow. We assume that $f_{\alpha}=0$. Now assuming $\varepsilon=0$
 and the same regularity of a level set around a point $p \in \Omega_{\alpha}$ as above the level set evolves in the direction of the inward unit normal vector with speed equal to $H^{1/\alpha}$.
 
 As far as these models appear in the literature (see \cite{MR1916951}, \cite{MR2420018} and references therein) the exponents in the speed and the exponents in the equation are related as follows: For the case $(\sigma,\alpha)=(1,1)$ we have a flow along the outer normal vector with speed given by the (multiplicative) inverse of the mean curvature (so-called inverse mean curvature flow) and in the case $(\sigma, \alpha)=(-1,-1/k)$ a contracting flow by a power $k$, $k\ge 1$, of the mean curvature (this includes mean curvature flow for $k=1$). 
 
The parameter $\varepsilon$ is a nonnegative regularization parameter which prevents the  equations 
from becoming singular if it is positive. The idea in the literature was to establish an existence theory for the above equations in the case of non-vanishing regularization parameter $\varepsilon>0$ and then taking the limit to solve the  equation of real interest. Taking that limit involves in the case $\sigma=\alpha=1$
the triple of parameters $(\varepsilon, R, L)$ and in the case $(\sigma,\alpha)=(-1,-1/k)$, solely the parameter $\varepsilon$. In the first case taking the limit is even a more challenging process which involves a certain relation between $\varepsilon$, $R$ and $L$ and leads to a weakly defined inverse mean curvature flow, see  \cite{MR1916951}. 
While basically taking into account all parameters is desirable our current methods are only able to perform finite element error analysis with qualitative constants with respect to the parameter $\varepsilon$. 
Therefore we make the following assumption throughout the paper. 
\begin{hypothesis}\label{hyp_new}
For simplicity we will restrict the discussion in our paper to the case $(\sigma,\alpha)=(-1,-1/k)$, $k\ge 1$, write $\Omega=\Omega_{\alpha}$ and set $f_{\alpha}=0$. 
\end{hypothesis}
This simplification essentially inherits already all difficulties arising solely from the parameter $\varepsilon$ which we want to discuss and present in the paper. 
 It simplifies
the situation of inverse mean curvature flow by omitting $L$ and $R$ as parameters as well as by simplifying the domain.
 Note that the consideration of a fixed 3D ring-type domain with fixed non-zero boundaries at the outer boundary without taking care of qualitative constants in finite element error estimates for the inverse mean curvature flow can be found already in the literature in \cite{MR2350186}. 

We recall some properties of the equations  in the case of a non-vanishing regularization parameter:  equation \eqref{unified_equation} has smooth solutions $u^{\varepsilon}$ for each $0<\varepsilon<\varepsilon_0$, 
$\varepsilon_0>0$ sufficiently small, cf. \cite{MR2420018}. 
The regularized solutions $u^{\varepsilon}$ converge locally uniformly in $\Omega$ to a function $u$ as $\varepsilon \rightarrow 0$.
Here, $u$ is a  continuous viscosity solution of \eqref{unified_equation} for $\varepsilon=0$, cf. \cite{MR2420018}. 
In order to solve equation  \eqref{unified_equation} computationally with $\varepsilon=0$, it is tempting and reasonable to circumvent the possible singularity of the equation by solving \eqref{unified_equation} for $\varepsilon>0$ small and fixed instead.  In this paper we also follow this approach. 

\red{Concerning applications and (an incomplete list of) major contributions to these flows} we mention the following facts about mean curvature flow and inverse mean curvature flow. Mean curvature flow was mathematically introduced in \cite{Brakke}
and serves \red{as a description} of the physical system given by the motion of grain boundaries in an annealing piece of metal  \cite{Mullins}.  
Parametric, smooth inverse mean curvature was discovered as crucial tool to prove the Riemannian Penrose Inequality 
in a special case, see \cite{MR523907} and references therein. 
By using the concept of weakly defined inverse mean curvature the above mentioned restriction to the special case could be widely removed  and a proof of that inequality was achieved in \cite{MR1916951} under the less restrictive assumption of a so-called connected apparent horizon.  In a further step the connected apparent  horizon assumption could be removed in \cite{MR1908823} with  a completely different geometric flow. The formulation of the Riemannian Penrose Inequality goes back to Penrose \cite{penrose} and has relevance in general relativity.

\subsection{The numerical analysis perspective on \eqref{unified_equation} and our contribution}\label{sec:contr} 
We focus on the numerical approximation of  \eqref{unified_equation} for $\varepsilon>0$. We derive error estimates where we show an explicit dependency on $h$ and a qualitative dependency on $\varepsilon$. 
\red{The work \cite{MR2350186} shows estimates between the regularized solution $u^{\varepsilon}$ of \eqref{unified_equation} and its corresponding discrete solution $u^{\varepsilon}_h$ of type}
\red{\begin{equation} \label{standard_case}
\begin{aligned}
 \norm{u^{\varepsilon} - u_h^{\varepsilon}}{W^{1,p}}\le c(\varepsilon) h,\quad \norm{u_{\varepsilon} - {u}^{\varepsilon}_h}{L^{p}}\le c(\varepsilon) h^2
\end{aligned}
\end{equation}}
for a finite element approximation of  \eqref{unified_equation} for some $\varepsilon>0$ fixed in the case $ (\sigma,\alpha)=(1,1)$  (inverse mean curvature flow) and discretization parameter $h>0$. 
The constant $ c(\varepsilon) $ depends in an unspecified way on $\varepsilon$. Note that this paper considers as domain a ring-type region and boundary values zero at the inner boundary and $L>0$ at the outer boundary, compare also our Assumption \ref{hyp_new}.
In \cite{HK} we were able to derive an estimate of these constants in form of an upper bound 
\red{\begin{align}
c(\varepsilon)\le C e^{P(1/\varepsilon)},
\end{align}}
with $P$ a polynomial in one variable, for the price that the approximation is based on piecewise cubic and globally $H^1$ conforming finite elements, the space dimension is three and that the resulting powers of $h$ on the right-hand side of \eqref{standard_case} depend on certain parameters but in an explicit way.
The contribution of this paper is to extend the results from the latter reference to an $H^2$--conforming approach and to state sufficient conditions for the parameters $\varepsilon$ and $h$ in order to get quantitative error bounds,
i.e. for $\mu\ge 2$ we derive estimates of type
\red{\begin{align}
  \norm{u^{\varepsilon} - u_h^{\varepsilon}}{W^{2,\mu}}\le c e^{P(1/\varepsilon)}h^{\delta}
\end{align}}
for a suitable positive  $\delta$.

Agreeing to the fact that finite element error estimates usually come in Sobolev norms (and excluding here in the sense of a first and not too technical  approach Sobolev spaces of non-integer differentiability  orders)
$H^2$--conformity is the lowest degree of conformity which allows
to obtain via embedding theorems error estimates in the $C^1$ norm, cf. Section~\ref{sec:3}. 
Note that $H^2=W^{2,2}$ does not embed into $C^1$ in general, but that for the considered finite element space $H^2$ conformity is a useful terminus, since it implies $W^{2,p}$ conformity for any $p\ge 2$ by definition of the discrete space. $H^2$ conformity in three dimensions is practically rather elaborate while it is a well-established approach at least in the two-dimensional case; cf. the Argyris element or the Clough-Tocher element for the biharmonic 
equation, \red{see also \cite{MR1930132}. 
For further applications of the Clough–Tocher (and also Hseigh--Clough--Tocher element) beyond biharmonic problems, we refer to 
\cite{DominicusGaspozKreuzer,MR4199789,MR2755946,KaweckiSmears:2020a,KaweckiSmears:2020b,MR3921147}. }
We remark that also nonconforming approaches have shown their relevance in numerical computations. Here, in this context we do not follow their approaches since we want  to get $C^1$ error estimates at the end but from a purely practically point of view they could be interesting for further studies.

 In the framework of viscosity solutions $C^0$ is a natural norm  and hence also for numerical error estimates of their discrete approximations. In the special application of the modeling of evolving level sets  the $C^1$-norm is the most natural norm which gives insight about the precise location of level sets. 
Around a point where the level set function $u$ is in $C^1$ with 
\begin{equation} \label{gradient_nonzero}
|Du| \ge \delta_0,
\end{equation} 
$\delta_0>0$ a constant, the error estimate translates into error estimates for the level sets via the implicit function theorem. Furthermore, it
allows to find regions where a condition of type \eqref{gradient_nonzero} holds at all.

\if{
\bigskip
OLD: the crucial improvement of the upper bound $e^{P(1/\varepsilon)}$ derived in \cite{HK} to 
$P(1/ \varepsilon)$ here, i.e. the newly derived expression depends only polynomially instead of exponentially on $\frac{1}{\varepsilon}$. We will show such a result under the additional assumption of $H^2$--conformity of the considered finite elements.
}\fi

 
\subsection{Motivation of the paper and relation to existing literature} 
The motivation to work out explicit dependencies of the constants on the regularization parameter is to obtain {\it full} error estimates. The latter type of estimates measures
the error between the solution of the discrete approximation and the solution of the equation \eqref{unified_equation} with $\varepsilon=0$. It is temptative for it to go via the triangle inequality over
 the solution of the regularized equation. In doing so explicit constants are very useful. 
The quantitative convergence results for approximations of geometric evolution equations 
 developed in this paper continue our previous work
\cite{MR3892432, MR3734408, HK} and focus for the first time on the higher order conforming case.
These references and the present paper try to identify the simplest setting given the scheme in \cite{MR2350186}  in which full error estimates are available which give information about the level sets.
Furthermore, this paper is
inspired from \cite[Rem.~4]{MR2350186}, \cite{MR1417861}, and \cite{MR1760409}. 

In \cite[Rem.~4]{MR2350186},  a question is raised asking about more explicit dependencies of the constants on the regularization parameter $\varepsilon$ than presented in that reference. Note that concerning that question our paper studies a crucially simplified setting, see \ref{sec:contr}. On the other hand we think that in order to answer that question it is a useful step to understand first this simplified setting.
In \cite{MR1417861} the authors show the convergence of a difference scheme to parabolic level set mean curvature flow and \cite{MR1760409} proves bounds for constants in error estimates (and in the sequel full error estimates)  for a scheme motivated from \cite{MR1417861} which are polynomial in the inverse regularization parameter. 
We think that  
it is desirable to have something similar (meaning convergence with dependencies on the parameters) for an elliptic problem and finite elements.

The authors are not aware 
of full error estimates in the literature in (at least) $C^1$--norms  for elliptic level set approaches to mean curvature flow or even for the parabolic level set mean curvature flow. 
\if{Apart from the fact that this is clearly an improvement the second author became aware of the relevance of improving available exponential rates in numerical error estimates to polynomial rates by \cite{MR1971212} where numerical analysis  for the Allen-Cahn equation is considered.
There, polynomial rates are derived using spectral estimates from \cite{MR1284813}. 
}\fi
 We refer also to the very recent work \cite{lubich}  about convergence and error estimates 
 in the $H^1$ norm for a finite element scheme for parametric mean curvature flow of closed surfaces which can be understood  as full error estimates with respect to their approach. 
For additional references about numerical analysis for geometric evolution equations in general we refer to the variety of references in \cite{MR3892432, lubich,MR2350186,MR2168343} as well as to these papers themselves.
Furthermore, we mention concerning rates of convergence with respect to a regularization parameter also a fourth order regularization of the radially symmetric Monge-Amp\`ere equation in \cite{MR3219673}. Here polynomial rates in the regularization parameter are shown under certain assumptions on the solution (and solution of the regularized problem).

\if{\textcolor{blue}{In  the current paper we give a strong theoretical indication that in general one cannot expect a better dependence on the regularization parameter than exponential. 
%
REMOVE: shown here by using $H^2$ conforming elements and as consequence controlling the error  in $W^{2,p}$ norms which is convenient for the underlying motivation of tracking level sets.
The study of the lower conforming case
will be considered in future work.}}\fi


\if{
 There are several $H^2_0$--conform elements in two dimensions, as the Argyris element with $P_A:=(P(K),\dim P(K))=21$, the Clough-Tocher element $P_C:=
(\{\varphi \in C^1(K)|\varphi|_{K_i} \in \in  P_3(K_i), i = 1,2,3 \},12)$, and the square element $P_S:=(Q_3(K),16)$ which are well established methods e.g. for the XXX equation or the XXX equation; however the condition of the corresponding matrices is 
very bad which motivates to consider mixed formulations.
}\fi


\if{There are several $H^2_0$--conform elements in two dimensions, as the Argyris element with $P_A:=(P(K),\dim P(K))=21$, the Clough-Tocher element $P_C:=
(\{\varphi \in C^1(K)|\varphi|_{K_i} \in \in  P_3(K_i), i = 1,2,3 \},12)$, and the square element $P_S:=(Q_3(K),16)$ which are well established methods e.g. for the XXX equation or the XXX equation; however the condition of the corresponding matrices is 
very bad which motivates to consider mixed formulations. (fuer welche Gleichung? fuer welche Matrix?). In 3D XXX
}\fi

 \if{We prove an upper bound for constants in standard finite element error estimates
 given by a polynomial expression of the inverse powers of the regularization parameter. In \cite{HK} the dependence was exponential expression in inverse powers of the regularization parameter as in 
 where we derived such a relation for a scheme taken from \cite{MR2350186} for equation \eqref{unified_equation}, see below.
 This improvement is currently achieved at the price of stronger assumptions on the degree of conformity of the finite elements, i.e. we now need $H^2$ instead of solely $H^1$--conformity.}\fi

\subsection{Notation} Throughout the paper we use for a bounded domain $\Omega \subset \mathbb{R}^3$ the usual notation for Lebesgue spaces $L^p(\Omega)$, $1\le p \le \infty$, and Sobolev spaces $W^{m,p}(\Omega)$, $0\le m < \infty$, $1\le p\le \infty$. For $p=2$ we write $H^m(\Omega):=W^{m,2}(\Omega)$. By $H^1_0(\Omega)$ we denote the closure of the smooth functions being compactly supported in $\Omega$ in the $H^1$--norm; its dual space is denoted by $H^{-1}(\Omega)$.  We denote generic constants in estimates usually by $c$ and use the summation convention that we sum over repeated indices. By $|\cdot|$ we denote the Euclidean norm, and by $|\cdot|_1$ the $1$-norm. We write $z=(z_1,z_2,z_3)^{\top} \in \mathbb{R}^3$. The Landau symbols are denoted by $\mathcal{O}(\cdot)$ and~$o(\cdot)$.

Note that the method presented here generalizes to several other situations and equations which inherit certain parameters for which a certain asymptotic is assumed.

{

\section{Main result}\label{sec:main_result}
We adopt the setting from  \cite{HK}.
 The domain of the level set function is considered to be three dimensional, so that level sets 
are in case of sufficient regularity two dimensional surfaces.
Let 
\begin{align}
\{\mathcal{T}_h: 0<h<h_0\}
\end{align}
be a family  of shape-regular and \red{quasi-uniform} triangulations (of tetrahedra) of $\Omega$, $h$ the mesh size of  $\mathcal{T}_h$ and $h_0=h_0(\Omega)>0$ small,
where we allow boundary elements to be curved and define
\beq \label{ref_link}
\Omega^h := \bigcup_{T\in \mathcal{T}_h}T;
\eeq
we have not in general $\Omega^h\subset \bar \Omega$.
We will specify the triangulation concerning its boundary approximation in the following two assumptions
and will already here stipulate that we will consider piecewise polynomial functions with polynomial degree $\operatorname{deg}$ at most $\operatorname{deg}\ge 8$, $\operatorname{deg} \in \mathbb{N}$, see Remark \ref{rem:101}. Note that the
last inequality will turn out to be necessary for technical reasons (otherwise \red{we are not able to obtain} a nonempty parameter range for $\delta$ in Theorem \ref{main_thm_endversion}) and that the `at most' condition arises from 
the widely used convention to work with functions which are piecewise polynomials and globally continuous of a prescribed maximal degree which we adopt here as well. 

\begin{hypothesis}\label{hyp1} For $h>0$ we assume the existence of the following $H^2$-conforming finite element space:
\beq
\begin{aligned} \label{def_V_h_}
V_h:=& \left\{
\begin{array}{l} w \in C^1(\bar \Omega^h) \; \bigg| 
\begin{array}{l}\text{for all } T\in \mathcal{T}_h:\;\; w|_T \text{ polynomial of degree} \le \deg \\
 \text{(up to the  transformation in case $T$ is a}\\
  \text{boundary cell)},\quad 
  w_{|\partial \Omega^h}=0.
  \end{array}
\end{array}
\right\}.
\end{aligned}
\eeq
Hereby, we allow for the finite elements restricted to the boundary cells to be accordingly transformed polynomials, see Appendix \ref{sec:real}.
\end{hypothesis}
The corresponding space when dropping the requirement $w_{|\partial \Omega^h}=0$ in \eqref{def_V_h_} is denoted by $\tilde V_h$.

\red{Let $d: \mathbb{R}^{3}\rightarrow \mathbb{R}$  be the signed distance function of $\partial \Omega$ where the sign convention is so that $d$ is negative inside $\Omega$ and nonnegative outside $\Omega$.
Let $\de_0 = \de_0(\Omega)>0$ be small and define
for $0<\de<\de_0$ that
\beq \label{notation_dist}
\Omega_{\de}:= \{x\in \mathbb{R}^3\;|\; d(x)<\de\}.
\eeq}
\begin{hypothesis}\label{hyp2}
For  $0<h\le h_0$ we assume that there exists an interpolation operator
 $\mathcal{I}_h:W^{\deg,\infty}(\Omega^h)\rightarrow \tilde V_h$
such that for $1\le p \le \infty$ 
\beq \label{866}
\|u-\mathcal{I}_hu\|_{W^{m,p}(\Omega^h)} \le ch^{\deg-m}\|u\|_{W^{{\deg,p}}( \Omega^h)} \quad \forall \; u \in W^{\deg,\infty}( \Omega^h), \quad m =1, 2.
\eeq
\if{\beq \label{866}
\|u-\mathcal{I}_hu\|_{W^{m,\infty}(\Omega^h)} \le ch\|u\|_{H^{{m+1}}( \Omega^h)} \quad \forall \; u \in H^{\deg}( \Omega ^h)
\eeq
\beq \label{866}
\|u-\mathcal{I}_hu\|_{H^{m}(\Omega^h)} \le ch\|u\|_{H^{{m+1}}( \Omega^h)} \quad \forall \; u \in H^{\deg}( \Omega ^h)
\eeq
\beq \label{866}
\|u-\mathcal{I}_hu\|_{H^{2}(\Omega^h)} \le ch^8\|u\|_{H^{{10}}( \Omega^h)} \quad \forall \; u \in H^{10}( \Omega ^h).
\eeq}\fi
Furthermore, 
there is a constant $0<\tilde c:=\tilde c(\Omega)$ so that
\beq \label{851}
\partial \Omega^h \subset \Omega_{\tilde c h^{\widetilde \deg} }\backslash \Omega_{-\tilde c h^{\widetilde \deg}}. 
\eeq
where we use the notation
\eqref{notation_dist} and assume that 
\begin{equation} \label{inequ}
  {\widetilde \deg}\le \deg/2, \quad {\widetilde \deg} \in \mathbb{N}.
\end{equation} 
\end{hypothesis}

Note that the inequality \eqref{inequ} 
is assumed for simplification and is to avoid a later consideration of two different cases and that the excluded range for $\widetilde \deg$ could be treated analogously but is not that interesting since it corresponds to a rather high boundary approximation. 
\red{Moreover, note that in the main Theorem \ref{main_thm_endversion} we will assume that $\widetilde{\deg} \ge 4$ which implies by \eqref{inequ} $deg \ge 8$, see also Example  \ref{example}.}
\if{While the hard reason for large values of 
$  {\widetilde \deg}$ arises from a necessary lower bound for $  {\widetilde \deg}$, see Remark \ref{rem:101}, the realizing Example \ref{example} uses a  value for $\deg$ which is even larger than required by \eqref{inequ}.}\fi

Throughout the paper we assume Assumptions  \ref{hyp_new}, \ref{hyp1} and \ref{hyp2}.


We denote the set of \red{vertices} of $\mathcal{T}_h$ by $N_h$. 
We recall that a continuous piecewise polynomial 
function whose derivatives are continuous is an element in $H^2(\Omega)$, cf. \cite[Thm. 5.2]{MR2322235}.
Since the curved elements at the boundary can be treated in the weak formulation of the discrete problem analogously as if they were exact tetrahedra \red{in the following we will refer to them  as to the usual tetrahedra.}
\red{We have }
\bea
\partial \Omega_{\de}\in C^{\infty},\quad \|\partial \Omega_{\de}\|_{C^{2}}\le c(\Omega)\|\partial \Omega\|_{C^{2}}.
\eea
Let $h_0>0$ be chosen such that 
\bea\label{h_0}
\overline{\Omega^{h}}\subset \Omega_{\delta_0} \quad\text{ for all } 0<h\le h_0.
\eea

We extend $u^{\varepsilon}$ (which is a $C^{\infty}$ smooth solution of \eqref{unified_equation}) to a function in $C^m(\Omega_{\de_0})$ with $m\in \mathbb{N}$ 
sufficiently large, denote the extension again by $u^{\varepsilon}$, and assume that 
\beq \label{865}
\|u^{\varepsilon}\|_{C^m(\Omega_{\de_0})}\le c\|u^{\varepsilon}\|_{C^m(\bar\Omega)}.
\eeq   
This extension is achieved by first performing a localization via a smooth partition of unity and then  a local $C^m$-reflection at the boundary. The whole procedure is standard in classical theory for partial differential equation, see \cite[Chapt. 7]{MR737190}. 




\if{For convenience we recall the following inverse estimate which is not formulated in a most general form but suitable for our purposes and which will be used several times in the paper without mentioning it every time, see \cite[Sec. 4.5]{BS} for a proof.
\begin{lemma} \label{inverse_estimate_lemma}
For $1\le p,q\le \infty$ there exists 
a constant $c>0$ such that
\begin{equation}
\|v_h\|_{W^{1,p}(\Omega^h)} \le c h^{\frac{3}{p}-\frac{3}{q}}\|v_h\|_{W^{1,q}(\Omega^ h)}
\end{equation}
for all $v_h \in V_h$.
\end{lemma}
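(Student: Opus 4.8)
The plan is to run the classical reference-element scaling argument for inverse estimates (as in \cite[Sec.~4.5]{BS}), reducing the global bound to an elementwise one and then reassembling. First I would fix $T\in\mathcal{T}_h$ and let $F_T\colon\hat T\to T$, $\hat x\mapsto B_T\hat x+b_T$, be the affine diffeomorphism from a fixed reference tetrahedron $\hat T$; for a boundary cell $F_T$ is instead the (possibly non-affine) transformation of Appendix~\ref{sec:real}, and $B_T$ is then to be read as its Jacobian $DF_T$. From shape-regularity and uniformity of $\{\mathcal{T}_h\}$ I would record the usual uniform bounds $\|B_T\|\le c\,h$, $\|B_T^{-1}\|\le c\,h^{-1}$ and $c^{-1}h^{n+1}\le|\det B_T|\le c\,h^{n+1}$, with $c$ independent of $T$ and $h$, and in particular the condition-number bound $\|B_T\|\,\|B_T^{-1}\|\le c$.

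Next, on $\hat T$ the polynomials of degree $\le\deg$ form a finite-dimensional space on which all Sobolev norms are equivalent, so $\|\hat w\|_{L^p(\hat T)}\le\hat c\,\|\hat w\|_{L^q(\hat T)}$ and $\|\nabla\hat w\|_{L^p(\hat T)}\le\hat c\,\|\nabla\hat w\|_{L^q(\hat T)}$ for every such polynomial $\hat w$. Writing $v_h|_T=\hat w\circ F_T^{-1}$ and using the change-of-variables identity $\|v_h\|_{L^r(T)}=|\det B_T|^{1/r}\|\hat w\|_{L^r(\hat T)}$ for $r=p$ and $r=q$, the first equivalence gives $\|v_h\|_{L^p(T)}\le c\,|\det B_T|^{1/p-1/q}\|v_h\|_{L^q(T)}$. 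For the gradient I would handle the seminorm separately, using $\nabla(\hat w\circ F_T^{-1})=(B_T^{-1})^{\top}\,(\nabla\hat w)\circ F_T^{-1}$ together with $\nabla\hat w=B_T^{\top}\,(\nabla v_h)\circ F_T$: transferring $T\to\hat T\to T$ then costs, besides $|\det B_T|^{1/p-1/q}$, one factor $\|B_T^{-1}\|$ and one factor $\|B_T\|$, and the point is that their product is $O(1)$, so no net negative power of $h$ survives. Combining the two contributions and using $|\det B_T|^{1/p-1/q}\sim h^{\frac{n+1}{p}-\frac{n+1}{q}}$, I obtain the elementwise inverse estimate $\|v_h\|_{W^{1,p}(T)}\le c\,h^{\frac{n+1}{p}-\frac{n+1}{q}}\|v_h\|_{W^{1,q}(T)}$ for every $T\in\mathcal{T}_h$, with $c$ independent of $T$ and $h$.

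Finally I would raise this to the $p$-th power (with the obvious modification when $p=\infty$), sum over $T\in\mathcal{T}_h$, and pull the factor $h^{\frac{n+1}{p}-\frac{n+1}{q}}$ out of the sum; the claim then follows from the discrete embedding $\ell^q(\mathcal{T}_h)\hookrightarrow\ell^p(\mathcal{T}_h)$ with embedding constant $\le1$, which is where --- as is typical for inverse estimates --- the genuinely relevant regime $q\le p$ enters. I expect the only part that is not entirely routine to be the boundary cells: one has to verify that the transformation of Appendix~\ref{sec:real} is close enough to affine that its Jacobian still obeys the uniform bounds $\|DF_T\|=O(h)$, $\|DF_T^{-1}\|=O(h^{-1})$, $|\det DF_T|\sim h^{n+1}$ used above, and that polynomials pulled back through $F_T^{-1}$ still lie in a fixed finite-dimensional space on $\hat T$, so that the norm-equivalence step holds with a constant uniform in $T$ and $h$; both are consequences of shape-regularity together with the explicit construction in that appendix. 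Everything else is bookkeeping of the exponents.
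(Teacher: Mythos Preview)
Your proposal is correct and follows essentially the same approach as the paper: the paper's own proof simply cites the standard inverse estimate in \cite[Sec.~4.5]{MR1278258} and defers the curved boundary cells to Remark~\ref{app:inv}, which in turn points to the diffeomorphism construction of Appendix~\ref{sec:real}---precisely the two ingredients you spell out (the reference-element scaling argument and the verification that the boundary transformation preserves the needed Jacobian bounds and the finite-dimensionality on $\hat T$). Your write-up is just a more explicit version of what the paper leaves to the cited reference; note only that the restriction to the regime $q\le p$ you flag in the summation step is indeed necessary (the stated inequality is false for $p<q$), which the paper's active formulation in Section~\ref{higher_order_estimates} accounts for via the $\min(0,\cdot)$ in the exponent.
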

}\fi

Furthermore, when (tacitly) extending functions $v_h \in V_h$ to $\mathbb{R}^3$ by zero we denote the extended function again by $v_h$. 

We formulate our main result.

\begin{theorem} \label{main_thm_endversion} Let Assumption \ref{hyp_new}, \ref{hyp1}, and \ref{hyp2} hold. 
Let $\mu\ge 2$, $\mu\in \mathbb{R}$, and $\widetilde{\deg} \ge 4$, $\widetilde{\deg}\in \mathbb{N}$, be given.
\red{Setting  $\nu:={\widetilde \deg} -2 +\frac{\widetilde \deg}{\mu}$ we choose  $\delta$ such that 
\begin{align}\label{cond:parameter1}
\left\{\begin{array}{ll}
 \min\left(\nu,\frac{3}{\mu} -\frac{7}{2} + \frac{4}{3} \widetilde{\deg} \right) > \delta > \frac{3}{2},&\text{if }\mu>3,\\
 \nu > \delta > \frac{3}{\mu} + \frac{1}{2},& \text{if }2\le \mu\le 3.
 \end{array}
 \right.
\end{align}}
%
Then for
\beq \label{tolerance}
\rho:=e^{\varepsilon^{-\gamma}}h^{\de}
\eeq
and  $\gamma>0$ a suitable constant (which can be calculated explicitly) 
the following holds: 
For every $0<\varepsilon<\varepsilon_0$ and $0<h \le h_0$ the equation
\beq \label{930}
\int_{\Omega^h} \frac{\left<D u^{\varepsilon}_h, D \varphi_h\right>}{\sqrt{\varepsilon^2+|D u^{\varepsilon}_h|^2}} = -\int_{\Omega^h}\eta\left(\sqrt{\varepsilon^2+|D u^{\varepsilon}_h|^2}\right)
\varphi_h \quad \forall \; {\varphi_h\in V_h },
\eeq
has a unique solution $u^{\varepsilon}_h$ in 
\beq \label{912}
\bar B^h_{\rho}:=\{w_h\in V_h: \|w_h-u^{\varepsilon}\|_{W^{2, \mu}(\Omega^h)} \le \rho \}.
\eeq
\end{theorem}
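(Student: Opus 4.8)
The plan is to prove existence and uniqueness of the discrete solution in the closed ball $\bar B^h_\rho$ via a Banach fixed point argument applied to the linearized problem. First I would define, for a given $w_h \in \bar B^h_\rho$, the linearization of \eqref{930} at $w_h$: replace the nonlinear coefficient $1/\sqrt{\varepsilon^2 + |Du_h|^2}$ and the nonlinear term $\eta(\sqrt{\varepsilon^2+|Du_h|^2})$ by their values (and/or first-order Taylor expansions) at $Dw_h$, thereby obtaining a linear elliptic problem whose solution operator $S$ maps $w_h \mapsto u_h$. A fixed point of $S$ is exactly a solution of \eqref{930}. The two ingredients needed are: (i) $S$ maps $\bar B^h_\rho$ into itself, and (ii) $S$ is a contraction on $\bar B^h_\rho$ in the $H^{2,\mu}(\Omega^h)$ norm. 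Both reduce to \emph{a priori} estimates for the linearized operator together with the interpolation estimate \eqref{866} and the consistency error of $u^\varepsilon$ in \eqref{930}.

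The key quantitative step is the $L^\infty$-type \emph{a priori} estimate for the linearized equation obtained from the proof of the Alexandrov maximum principle \cite[Thm. 9.1]{MR737190}, as flagged in the introduction. On $\bar B^h_\rho$ the coefficients of the linearization are controlled in terms of $\|u^\varepsilon\|_{C^m}$, $\varepsilon$, and $\rho$; since $\rho \to 0$, for $h$ small the linearized coefficients are uniformly comparable (up to $\varepsilon$-dependent factors) to those of the linearization at $u^\varepsilon$ itself. The Alexandrov argument then yields a bound on the $W^{2,\mu}$ norm of the solution (and, by difference, on $S w_h - S w_h'$) whose constant depends on the ellipticity ratio and the $L^\infty$ norms of the lower-order coefficients in an \emph{exponential} way in $1/\varepsilon$ — this is the source of the factor $e^{\varepsilon^{-\gamma}}$ in the definition \eqref{tolerance} of $\rho$. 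One balances this exponential constant against the power $h^{\deg - 2}$ coming from interpolation and the power $h^{\widetilde\deg}$ from the boundary approximation \eqref{851}: the admissible range \eqref{cond:parameter1} for $\delta$ and the formula for $\nu$ are exactly what is forced by requiring $e^{\varepsilon^{-\gamma}} \cdot (\text{interpolation + boundary errors}) \le \rho = e^{\varepsilon^{-\gamma}} h^\delta$ and $e^{\varepsilon^{-\gamma}} \cdot (\text{Lipschitz constant of nonlinearity on } \bar B^h_\rho) \cdot \rho < \rho$, i.e. contractivity. The Sobolev embedding $H^{2,\mu}(\Omega^h) \hookrightarrow C^1$ in dimension $3$ for $\mu \ge 2$ (discussed in Section~\ref{sec:3}) is used to make sense of $|Dw_h|$ pointwise and to control the nonlinear coefficients on the ball.

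The steps, in order: (1) write the weak linearization of \eqref{930} at $w_h \in \bar B^h_\rho$ and verify it defines a well-posed linear finite element problem, giving the map $S$; (2) estimate the consistency error, i.e. bound $\|S(u^\varepsilon) - u^\varepsilon\|_{H^{2,\mu}(\Omega^h)}$ using \eqref{866}, \eqref{851}, \eqref{865} and the smoothness of $u^\varepsilon$ from \cite{MR2420018}, picking up a polynomial-in-$1/\varepsilon$ factor and a power of $h$; (3) establish the a priori estimate for the linearized operator via the Alexandrov-type argument, tracking the exponential dependence on $1/\varepsilon$ explicitly and thereby fixing $\gamma$; (4) combine (2) and (3) to show $S(\bar B^h_\rho) \subset \bar B^h_\rho$ for $h \le h_0$, $\varepsilon < \varepsilon_0$; (5) show $S$ is a contraction by applying the same a priori estimate to the difference of two linearized problems, using that the nonlinear coefficients are Lipschitz on $\bar B^h_\rho$ with a constant absorbed by the smallness of $\rho$; (6) invoke Banach's fixed point theorem. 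The main obstacle is step (3): extracting from the proof of the Alexandrov maximum principle an \emph{explicit} constant — in particular making precise how the ellipticity constant degenerates like a power of $\varepsilon$ and how the resulting estimate assembles into $e^{\varepsilon^{-\gamma}}$ — and then checking that this is compatible, for an appropriate choice of $\gamma$ and the stated range of $\delta$, with both the self-mapping and the contraction requirements simultaneously.
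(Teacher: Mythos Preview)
Your overall plan---Banach fixed point for a linearized discrete problem, with constants tracked via the Alexandrov maximum principle---matches the paper's strategy, but two concrete points diverge from what the paper does, and the second is a genuine gap.

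First, you linearize at the iterate $w_h$; the paper instead linearizes once and for all at the \emph{continuous} solution $u^\varepsilon$. The fixed-point map is $T w_h$ defined by $L_\varepsilon(w_h - T w_h) = \Phi_\varepsilon(w_h)$ with $L_\varepsilon$ the linearization of $\Phi_\varepsilon$ at $u^\varepsilon$, cf.\ \eqref{201} and \eqref{204}. The coefficients of $L_\varepsilon$ are then smooth with bounds $P(1/\varepsilon)$ independent of the iterate, so the discrete a~priori estimate of Theorem~\ref{221} carries a constant $e^{P(1/\varepsilon)}$ that does not change along the iteration. Your Newton-type choice forces you to control a different linear operator at each step and to compare solutions of two different operators for the contraction; this is not what the paper does and would add work.

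Second, the claim that $H^{2,\mu}(\Omega^h)\hookrightarrow C^1$ for $\mu\ge 2$ is false in dimension $n+1=3$: by Theorem~\ref{thm:sob-emb} one needs $\mu>3$. Hence for $2\le\mu\le 3$ you cannot control $|Dw_h|$ pointwise via Sobolev embedding, and your step~(5) has a gap precisely in the range where the second line of \eqref{cond:parameter1} applies. The paper never uses this embedding in the proof. Instead it controls $\|D w_h - Du^\varepsilon\|_{L^\infty}$ and $\|D^2 w_h - D^2 u^\varepsilon\|_{L^\infty}$ through the interpolant and \emph{inverse estimates} (Lemma~\ref{inverse_estimate_lemma}), picking up factors $h^{1-\frac{3}{\mu}}$ and $h^{-\frac{3}{\mu}}$; see \eqref{8.15}--\eqref{8.16}. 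More generally, the route to the $W^{2,\mu}$ bound is deliberately indirect: Alexandrov gives $L^\infty$, which feeds into an explicit $H^1$ and then $H^2$ estimate (Lemma~\ref{220}, Theorem~\ref{221}), and only afterwards does one pass to $W^{2,\mu}$ by an inverse estimate costing $h^{\frac{3}{\mu}-\frac{3}{2}}$, cf.\ \eqref{870}. The $H^2$-conformity enters separately by allowing the term $D_i f_i$ to be differentiated out explicitly, \eqref{4.12}--\eqref{expansion_Difi}. Tracking all of these $h$-losses yields the five exponents in \eqref{five_terms}, and their positivity is exactly the lower bound on $\delta$ in \eqref{cond:parameter1}. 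The upper bound $\nu$ arises in the self-map step from the boundary correction $\tilde u^\varepsilon=\mathcal I_h u^\varepsilon - z_h\in V_h$ (needed since $\mathcal I_h u^\varepsilon$ does not vanish on $\partial\Omega^h$), which brings in the power $h^{\widetilde\deg}$ from \eqref{851} via \eqref{901} and \eqref{sufficient2a}--\eqref{sufficient2b}. Your outline does not isolate either mechanism, so as written it would not produce the stated range for $\delta$.
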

\begin{remark}\label{rem:101}
We have $4\le \widetilde{\deg}\le \frac{\deg}{2}$, see the discussion below Assumption~\ref{hyp2}, and hence $\deg\ge 8$.

 Note that the assumption $\mu \ge 2$ is chosen here since it corresponds to a natural range for the orders of the Sobolev spaces in the error estimate and will be assumed throughout the paper. Furthermore, it includes the desired case $\mu>3$ such that we have the continuous embedding of $W^{2,\mu}$ into $C^1$. 
 
 The theorem does not provide information about existence/uniqueness of a solution of the discrete problem outside the specified neighborhood. 
 
 Our perspective and the purpose of the theorem is to study  {\it theoretical} properties for the numerical scheme from \cite{MR2350186} and hereby to supplement findings therein for a fixed regularization parameter with asymptotical analytical aspects arising from a variable regularization parameter (in combination with higher order Sobolev norms).
  \end{remark} 

\begin{example}\label{example}
 \red{We give  examples for possible choices.
Set $\widetilde{\deg}=4$ and $\deg=9$ then sufficient conditions for $\delta$ read as follows:
\begin{align}\label{cond:parameter1b}
2<\delta<4                    \quad  \text{ for } \mu=2,\quad\quad \quad 
3/2<\delta<4 \quad \text{ for } \mu=4.
 \end{align}
For a realization see Section \ref{sec:real}.
 The reader should have in mind this choice for $\widetilde{\deg}$ and $\deg$ as the prototype example and the simplest scenario known to the authors.
 }
 \end{example}
\if{
 (i) For $\deg=\widetilde{\deg}=9$ (see for a realization Section \ref{sec:real}) the condition for $\delta$ reads as follows:

\begin{align}\label{cond:parameter1b}
 
\end{align}
%
and for $2\le \mu\le 3$ 
\begin{align}\label{cond:parameter2b}
 \frac{3}{\mu}  + \frac{17}{2} \ge \delta \ge \frac{3}{\mu} - \frac{1}{2}.
\end{align}
}\fi
%
%
%
In comparison with our previous result \cite[Thm. 1.2]{HK} there are 
three main differences:
\begin{itemize}
\item[(i)]  In \cite{HK}  we used $H^1$ instead of $H^2$--conforming finite elements. 
Correspondingly,  in \cite{HK}  we estimated $W^{1, \mu}$- instead of $W^{2, \mu}$--norms as in the present paper.

\item[(ii)]  The dependencies on $h$ and $\varepsilon$ specifying $\rho$ for the $H^2$ case are derived here. 
The derivation requires extra effort compared to the standard one which does not lead to such expressions of $\rho$ in terms of $\varepsilon$ and $h$. Here the `standard one' is meant to be the simplified situation when in \eqref{add_equation} $c_i=0$ so that \eqref{standard1}
can be achieved directly without going over the $L^{\infty}$ estimate.
For the convenience of the reader we recall this consideration in Section \ref{sec:3} adapting ideas from \cite{HK} to the setting considered here. Note that it seems  that the parameter range to be considered in this paper can not be directly derived from \cite[Thm. 1.2]{HK} by a simple `scaling property' argument. The basic structure of \eqref{tolerance} is here the same as in that reference and the value of $\gamma$ is theoretically computable (but the precise number not that valuable) 
\red{we recall for convenience the outcome for $\delta$ from that reference,
\begin{equation}
\frac{\mu}{3}
-\frac{3}{q}
+
\frac{3}{2}< \delta < \frac{3}{\mu}+\frac{1}{2}+\frac{1}{q}.
\end{equation}}

Moreover, note the little subtleness that the validity of the assumptions in Theorem \ref{main_thm_endversion} does 
not imply that the assumptions of the paper \cite{HK} are satisfied because we fixed therein the 
concrete space of piecewise polynomial functions (while here we keep the space more abstract). 
\item[(iii)] We spend some extra effort to present a realization of the required finite element space, see Appendix \ref{sec:real}.
\end{itemize}

\section{Illustration of the line of arguments in the proof and overview of  different error types}\label{sec:3}
At the beginning we would like to remark that our approach follows 
ideas from \cite{HK} leading to a non-trivial relation between involved parameters (which one might potentially improve in future work). 
We devote the present section to an illustration of the underlying idea as well as its limitations. 

We argue formally. This should serve as supplementing 
information to the independent and rigorous other sections of the paper. 

First, note the very basic fact that actually three  equations are of relevance: the nonlinear, degenerate elliptic and possibly singular level set equation \eqref{unified_equation} with $\varepsilon=0$
which models the original application of the evolution of level sets by a power of the mean curvature, and the regularized equations \eqref{unified_equation} which are also nonlinear,  but not singular and the in the following section introduced linearized equations \red{of type} \eqref{201} with coefficients depending on the solution of the regularized equations. The last two mentioned types of equations also have their discrete counterparts  \eqref{930} and \eqref{L-discrt} in the paper.
The proof is based on a fixed point argument by using the linearized operator.
For it we derive the constant which appears in the error estimate for the linearized equation explicitly. The dependence is implicit since our linearized operator $L$ is not coercive but satisfies a G\r{a}rding inequality, see for the operator $L$ Equation \eqref{2001}. In the literature 
 any standard a priori estimate of type
\beq \label{standard1}
\|u\|_{H^1} \le c \|f\|
\eeq
with suitable norm $\|\cdot\|$
for the equation 
\beq \label{add_equation}
Lu=f
\eeq
relies as far we know usually on an indirect argument (see, e.g., \cite[Cor. 8.7]{MR737190}) from which also the constant $c$ is obtained. Note, that in the different case that $L$ is the Laplacian an explicit constant can be easily derived. In order to get an explicit constant for the general case we test \eqref{add_equation} with $u$ and get by integration by parts formally
\beq
- \int_{\Omega} a_{ij}D_juD_iu + \int_{\Omega} c_iD_iuu =\int_{\Omega} fu.
\eeq
Hence, by using $0 < \lambda \delta_{ij}\le a_{ij}$ (the inequality is understood in the sense of quadratic forms) for some positive $\lambda=\lambda(\varepsilon)$, $\varepsilon$ being the variable appearing in \eqref{2001},  and choosing  $\delta >0$ sufficiently small we have for some $c_{\delta}>0$
\beq \label{add_gradient_estimate}
\lambda \int_{\Omega} |Du|^2 \le \int_{\Omega} (\delta \red{|c_iD_iu|^2}+c_{\delta}u^2)+\int_{\Omega} (f^2+u^2).
\eeq
Now, we use an $L^{\infty}$ estimate of type
\beq
\|u\|_{L^{\infty}} \le c_0 \|f\|
\eeq
with constant $c_0$ whose dependence on the regularization parameter $\varepsilon$ can be derived explicitly. Via
\eqref{add_gradient_estimate} this can be put together immediately to \red{obtain} an $H^1$ a priori estimate for $u$
with an explicit constant.
Note that this trick requires that the discrete estimates are  always derived by the $H^1$ estimate while the norms of interest are of type $W^{2,\mu}$ and hence, the required  inverse estimates reduce the powers of $h$.
Proceeding in this way it remains as a challenge to achieve a  less restrictive range for all the technical parameters in the main result Theorem \ref{main_thm_endversion}, especially to get the largest possible power of $h$ in the definition of $\rho$. We think that obtaining {\it any} error estimate with qualitative dependencies on the parameters is already challenging.

Second, the asymptotic \red{value} of $c_0$ in terms of $1/\varepsilon$ is of interest. The linearized equation evaluated at $u^{\varepsilon}\in C^{\infty}$ for the case that the gradient of $u^{\varepsilon}$ is small in an open subset of the domain and small compared to $\varepsilon$, has  coefficients (of the lower order (e.g. first order) terms) locally of size $\frac{1}{\varepsilon^m}$, $m\in \mathbb{N}$, and arbitrary sign.
 A simple prototype equation of that type with solution $u=u(t)$,
 \beq
 -u''+\frac{1}{\varepsilon}u'=0
 \eeq
 in $(0,1)$, already shows that $u' \approx c e^{\frac{t}{\varepsilon}}$ and \red{$u'' \approx  \frac{c}{\varepsilon}e^{\frac{t}{\varepsilon}}$}.
 This is a strong indication for optimality of our exponential bound in Theorem \ref{main_thm_endversion}
 as long as this worst case prototype can appear.
It has not been studied yet if such worst cases can be excluded. We leave this point  for future work. If that is possible this implies that in the current paper and previous work \cite{HK} the exponential asymptotic in the main theorem can be replaced by a polynomial one.
 %
 It is quite plausible that the quality of the overall error bound from our main theorem is prescribed by the `quality' of the constant in the a priori estimate for the linearized equation
 \eqref{201}. 
 Experimentally, a polynomial coupling between $\varepsilon$ and $h$ (of size $h=\varepsilon^2$) is suggested in \cite[p.114]{MR2350186} which refers to our knowledge to observations from an example calculation which excludes singular points (i.e. stationary points of the solution) in the computational domain.
 \red{In our experimental previous work}, in a more general case, i.e. with stationary points in the computational domain, we even could not solve computationally the problem with very small values of $\varepsilon$, see~\cite{MR3892432}. From the parameter regime where computations were possible we cannot draw a clear conclusion for a suggestion of a rate.
The parabolic approach in \cite{MR1417861} and \cite{MR1760409} (where especially in the first paper a different class of equations is considered which also includes mean curvature flow) allows error estimates of even polynomial type in inverse powers of the regularization parameter. Note that we study an elliptic equation and that we are not aware of a parabolic level set approach to the inverse mean curvature flow.

Third, recalling the Sobolev embedding theorem, see Appendix \ref{app:C} we see that here we need second weak derivatives in order to have embeddings at least in $C^1$, since for obtaining $k\ge 1$ we need by \eqref{sob-2}  that $m\ge 2$. In this case the embedding is given by $W^{2,p}(\Omega) \subset C^{1,\alpha}(\Omega)$ continuous with $ 3< p <\infty$ and $\alpha:=1-3/p$.

Fourth, in Table \ref{table:kysymys} we present open problems and proved results towards a full error estimate up to the $C^1$ norm.
The table gives an overview about literature on different errors appearing in connection with our scheme (note that for future work it might be worth studying and comparing with schemes
which omit the regularization procedure).  For $u$ being the solution of  \eqref{unified_equation} with $\varepsilon=0$, $u^{\varepsilon}$ the solution of \eqref{unified_equation} with $\varepsilon>0$, and $u_h^{\epsilon}$ the solution of~\eqref{930} the corresponding full errors, regularization errors, and discretization errors
 of type~1 (i.e. with explicit dependence on $\varepsilon$) and type 2 (i.e. without explicit dependence on $\varepsilon$) are shown. Here $0<\alpha <1$ is arbitrary
apart from \cite{MR3892432} where $\alpha=0$ and $u$ and $u^{\varepsilon}$ are specified by the more complicated model corresponding to the inverse mean curvature flow.
Concerning the $C^1$-norm row we remark that when one does not take care about explicit constants the estimate of type 1  which serves here as first estimate of type 2 can be improved (in the sense of a type 2 estimate) concerning the powers of $h$. This can be done by classical arguments, however an explicit derivation of the dependence on the parameters might be unclear. In the two cases marked as open even the convergence is unknown and is probably  only under stronger assumptions on $\Omega$ expectable, e.g. convexity. Note that this table ignores the additional complexity arising from modeling the inverse mean curvature flow by using also the domain and the boundary values as parameters. 
\begin{table}[htb]
\resizebox{0.6\textwidth}{!}{
\begin{tabular}{lcccc}
\toprule
& \multicolumn{2}{c}{$||u^{\epsilon}-u_h^{\epsilon}||$} &    &  \\
\cmidrule{2-3}
&type 2 & type 1 &  $||u-u^{\epsilon}||$ &  $||u-u_h^{\epsilon}||$ \\ 
\cmidrule{2-2}\cmidrule{3-3}\cmidrule{4-4}\cmidrule{5-5}
\vspace{1mm}
$C^{0,\alpha}$-norm & \cite{MR2350186} &  \cite{HK} &\cite{MR3892432,MR3734408} & \cite{HK}  \\
$C^1$-norm &   follows from &  Thm. \ref{main_thm_endversion}  & open& open \\
 &   type 1&  ($\mu>3$) &  &  \\
\bottomrule
\end{tabular}}
\vspace{2mm}
\caption{Literature on different error estimates.}
\label{table:kysymys}
\end{table}

Fifth, we present results from the literature under which conditions the solution (also called `arrival time') of the equation \eqref{unified_equation} in the case $k=1$
and $\varepsilon=0$ (this is the most important case of mean curvature flow) has higher regularity.
 \red{In \cite{MR1216584,MR1030675} it is shown} that for a convex initial hypersurface the flow is smooth except at the point where it becomes extinct and that the arrival time is then $C^2$.
 \cite{MR1189906,MR1216585} present an example in $\mathbb{R}^3$ of a rotationally symmetric mean convex dumbbell with arrival time being not $C^2$.
 \cite{MR2200259} shows that it is at least $C^3$ in the case of curves. \cite{MR2383930} shows for $n > 1$ that the above mentioned result from  \cite{MR1216584,MR1030675} is optimal by giving examples of convex initial hypersurfaces with arrival time being not three times differentiable.
\cite{MR3772402} proves: If $\partial \Omega$ has positive  mean curvature, then the arrival time $u$ is $C^2$ iff the following two conditions hold:

\begin{itemize}
\item[(i)] There is exactly one singular time T (where the flow becomes extinct).

\item[(ii)] The singular set (i.e. the set of critical points) is a $k$-dimensional closed, connected, embedded $C^1$ submanifold of singularities where the blowup is a cylinder $S^{n-k}\times \mathbb{R}^k$ at each point.
\end{itemize}

Finally, in view of the higher regularity of the level set function in special cases
it seems realistic to expect also convergence of the regularization error in higher order norms in special cases which we leave here open.


\section{Discrete $W^{2,\mu}$-estimates with explicit constants
} \label{higher_order_estimates}
We derive some auxiliary estimates for a class of linear equations which includes in particular the linearization of \eqref{unified_equation}; we proceed analogously to \cite{HK}. 

\if{To prove these auxiliary estimates we proceed similarly as in \cite{HK},
apart from when the $L^{\infty}$--estimate comes into play then one has to use the estimate \eqref{new_sup_estimate-est}. We omit here the detailed proofs
and refer for details to \cite{HK} but state all lemmas for the convenience of the reader in the new variant.}\fi

The linearized operator $L_{\varepsilon}:=L_{\varepsilon}(u^{\varepsilon})\colon H^1_0(\Omega)\rightarrow H^{-1}(\Omega)$  for  \eqref{unified_equation} in a solution $u^{\varepsilon}$
is given by
\beq \label{2001}
Lu := D_i(a_{ij}D_ju) + c_iD_i u
\eeq
in $\Omega$ with coefficients given as follows (such a calculation can be found in \cite{HK}):
We define for $\varepsilon>0$ and $z \in \mathbb{R}^3$
\beq
|z|_{\varepsilon}:=f_{\varepsilon}(z):=\sqrt{|z|^2+\varepsilon^2}
\eeq
and denote derivatives of $f_{\varepsilon}$ with respect to $z_i$ by $D_{z_i}f_{\varepsilon}$, i.e. 
there holds
\beq
D_{z_i}f_{\varepsilon}(z)= \frac{z_i}{|z|_{\varepsilon}}; \quad 
D_{z_i}D_{z_j}f_{\varepsilon}(z)=\frac{\de_{ij}}{|z|_{\varepsilon}} - \frac{z_iz_j}{|z|^3_{\varepsilon}};
\eeq
with these notations we set
\begin{equation}
a_{ij} := -D_{z_i}D_{z_j}f_{\varepsilon}(Du^{\varepsilon}) \quad \text{and} \quad  c_i:=\eta'(|Du^{\varepsilon}|)D_{z_i}f_{\varepsilon}(Du^{\varepsilon}).
\end{equation}
Straightforward  application  of the Cauchy-Schwarz inequality  shows the positive definiteness of $a_{ij}$,
\begin{equation}
\left(
|z|_{\varepsilon}^2\delta_{ij}-z_iz_j
\right)
\eta_i\eta_j
\ge |z|_{\varepsilon}^2|\eta|^2-\left<z, \eta\right>^2 
= (|z|^2+\varepsilon^2)|\eta|^2-\left<z, \eta\right>^2\ge \varepsilon^2|\eta|^2
\end{equation}
for any $\eta=(\eta_i) \in \mathbb{R}^n$ and  $z=(z_i)\in \mathbb{R}^n$.
Furthermore, we introduce the equation
\beq \label{201}
Lu = g+D_i f_i
\eeq
with $f_i\in W^{1,p}(\Omega)$, $1\le i\le 3$, and $g$ in $L^p(\Omega)$, $p\ge 1$. The special formal structure of the right-hand side
is chosen as in \cite{HK} due to the use of $H^1$ conforming finite elements one only has $f_i\in L^p(\Omega)$.

\if{For the convenience of the reader we state the following well-known 
 inverse estimate for finite elements  from \cite{MR1278258} which will be used several times without mentioning it explicitly each time.
 \footnote{Uniform triangulation important. AK} }\fi

 For convenience we recall the inverse estimate which will be used without mentioning it each time.
\begin{lemma}[Inverse estimate] \label{inverse_estimate_lemma}
For $1\le p,q\le \infty$ and $0\le m\le l\le 2$ there exists 
a constant $c>0$ such that
\begin{equation}\label{inv-est}
\|v_h\|_{W^{l,p}(\Omega^h)} \le c h^{m-l+\min(0, \frac{3}{p}-\frac{3}{q})}\|v_h\|_{W^{m,q}(\Omega^ h)}
\end{equation}
for all $v_h \in V_h$.
\end{lemma}
\begin{proof}
  The result follows from the inverse estimate \cite[Sec. 4.5]{MR1278258} and Remark \ref{app:inv}. 
\end{proof}

\red{In the sequel $P$ applied to a list of positive arguments $l_1, \dots, l_m$, $m\in \mathbb{N}$, stands for 
an expression which depends at most polynomial on $\max\{l_i, 1/l_i\}$, $i=1, \dots, m$; i.e. we have 
with $r \in \mathbb{N}$ and $c>0$ that
\begin{equation} \label{explain_P}
P(1/\varepsilon)\le \mathcal{O}(\varepsilon^{-r})
\end{equation}
for small $\varepsilon$ 
where $c,r$ do not depend on $\varepsilon$ and may vary  from line to line where the expression appears.}

Using standard elliptic regularity theory it is shown in \cite[Sec. 5]{HK} that  
the solution $u^{\varepsilon}$ of \eqref{unified_equation} is smooth and satisfies the estimate
\beq
\|u^{\varepsilon}\|_{H^m(\Omega)} =P(1/\varepsilon)
\eeq
for all $m\in \mathbb{N}$.

 Writing $L=L_{\epsilon}$ we say that $u_h \in V_h$ is a finite element solution of \eqref{201} with homogeneous Dirichlet boundary conditions if
\beq\label{L-discrt}
- \int_{\Omega^h} a_{ij}D_ju_h D_i v_h \dd x + \int_{\Omega^h} c_iD_i u_h  v_h \dd x = \int_{\Omega^h} (D_if_i+g) v_h \dd x \quad \forall v_h \in V_h.
\eeq


\begin{lemma}\label{220}
Let $\delta_0$ be given as in Section \ref{sec:main_result} and $h_0>0$ such that \eqref{h_0} is satisfied. We allow
unless specified concretely that
\begin{equation}
\left\{
 \begin{aligned}
\tilde \Omega&=\Omega^h\text{ with $0<h<h_0$ or }\\
\tilde \Omega&=\Omega_{\de_0} 
 \end{aligned}
 \right.
\end{equation}
and  assume $D_if_i+g \in L^3(\tilde \Omega)$.

(i) There exists a unique solution $u \in H^2(\tilde \Omega)\cap H^1_0(\tilde  \Omega)$ of \eqref{201} with $L=L_{\epsilon}$ satisfying
\bea \label{241-b}
\norm{u}{H^2( \tilde \Omega)}&\le  e^{ P(1/\varepsilon)}\|D_if_i+g\|_{L^3(\tilde \Omega)}.
\eea

(ii) Furthermore, assuming $\tilde \Omega=\Omega^h$ in (i) we have the best-approximation property
\bea\label{abc}
\norm{u - u_h}{H^1(\Omega^h)}\le e^{ P(1/\varepsilon)}\inf_{v_h \in V_h} \norm{u-v_h}{H^1(\Omega^h)}
\eea
where $u_h$ is a solution of \eqref{L-discrt}.
\end{lemma}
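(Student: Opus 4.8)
The plan is to treat the linear operator $L_\varepsilon$ via a G\r{a}rding inequality plus a quantitative $L^\infty$-estimate, exactly as sketched in Section~\ref{sec:3}, and then bootstrap to $H^2$. First I would establish existence and uniqueness for \eqref{201} with $L=L_\varepsilon$: since $a_{ij}=-D_{z_i}D_{z_j}f_\varepsilon(Du^\varepsilon)$ is uniformly elliptic with ellipticity constant $\lambda=\lambda(\varepsilon)$ (one computes $\lambda\gtrsim \varepsilon^2/(|Du^\varepsilon|^2+\varepsilon^2)^{3/2}$, which is $e^{-P(1/\varepsilon)}$-type once one inserts the bound $\|u^\varepsilon\|_{C^1}=P(1/\varepsilon)$) and $c_i=\eta'(|Du^\varepsilon|)D_{z_i}f_\varepsilon(Du^\varepsilon)\in L^\infty$ with $\|c_i\|_\infty=P(1/\varepsilon)$, the bilinear form $B[u,v]=\int_{\tilde\Omega}(a_{ij}D_ju D_iv - c_iD_iu\, v)$ satisfies a G\r{a}rding inequality $B[u,u]\ge \tfrac{\lambda}{2}\|Du\|_{L^2}^2 - C_\varepsilon\|u\|_{L^2}^2$ with $C_\varepsilon=P(1/\varepsilon)/\lambda$. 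Fredholm theory then reduces existence to uniqueness, and uniqueness follows from the Alexandrov/Aleksandrov maximum principle \cite[Thm.~9.1]{MR737190} applied to the homogeneous equation: a solution in $H^2\cap H^1_0$ is in $W^{2,n+1}$, hence the maximum principle applies and forces $u\equiv 0$.

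Next I would derive the a priori bound \eqref{241-b}. The key ingredient is the $L^\infty$-estimate from the proof of the Alexandrov maximum principle: for $u\in W^{2,n+1}\cap H^1_0$ solving $L_\varepsilon u = g + D_if_i$ one gets $\|u\|_{L^\infty}\le c_0(\varepsilon)\|g+D_if_i\|_{L^{n+1}}$ where, as the authors note, working through that proof yields $c_0(\varepsilon)=e^{P(1/\varepsilon)}$ (the exponential coming from the $\exp(\|c_i/\lambda\|_{L^{n+1}})$-type factor in the ABP constant). Then, testing the equation with $u$ and integrating by parts gives, after absorbing the first-order term with Young's inequality and the ellipticity lower bound, the analogue of \eqref{add_gradient_estimate}: $\lambda\|Du\|_{L^2}^2 \le c_\varepsilon\|u\|_{L^2}^2 + \|g+D_if_i\|^2$-type terms; combining with the $L^\infty$-bound (which controls $\|u\|_{L^2}$ on the bounded domain) yields $\|u\|_{H^1}\le e^{P(1/\varepsilon)}\|g+D_if_i\|_{L^{n+1}}$. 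Finally, the $H^2$-estimate follows from interior-and-boundary elliptic regularity for the operator $D_i(a_{ij}D_j\cdot)$: writing the equation as $D_i(a_{ij}D_ju) = g + D_i(f_i - c_iu/\!\ldots)$ — more precisely moving $c_iD_iu$ to the right and using the already-established $H^1$-bound — one gets $\|u\|_{H^2}\le C(\|a_{ij}\|_{C^1})\big(\|g\|_{L^2}+\|f_i\|_{H^1}+\|c_iD_iu\|_{L^2}+\|u\|_{H^1}\big)$, and every constant here is again $P(1/\varepsilon)$ or $e^{P(1/\varepsilon)}$; this gives \eqref{241-b}. Here one must be slightly careful on the curved boundary cells, but by the convention in Section~\ref{sec:main_result} these are handled as ordinary tetrahedra after the boundary transformation, and $\partial\Omega_{\delta_0}$ resp. $\partial\Omega^h$ has the requisite $C^2$-regularity.

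For part (ii), the best-approximation (C\'ea-type) estimate: let $u$ solve \eqref{201} on $\Omega^h$ and $u_h\in V_h$ its Galerkin approximation. Galerkin orthogonality gives $B[u-u_h,v_h]=0$ for all $v_h\in V_h$, so for any $v_h\in V_h$, writing $e=u-u_h$ and $w_h=u_h-v_h\in V_h$, we have $B[e,e]=B[e,u-v_h]$. Using the G\r{a}rding inequality on the left, $\tfrac{\lambda}{2}\|De\|_{L^2}^2 \le B[e,e] + C_\varepsilon\|e\|_{L^2}^2 = B[e,u-v_h] + C_\varepsilon\|e\|_{L^2}^2 \le C_\varepsilon'\|e\|_{H^1}\|u-v_h\|_{H^1} + C_\varepsilon\|e\|_{L^2}^2$; the troublesome $\|e\|_{L^2}^2$ term is then handled by a duality (Aubin--Nitsche) argument, solving the adjoint problem $L_\varepsilon^*\phi = e$ whose a priori bound is again of the form $e^{P(1/\varepsilon)}\|e\|$ by part (i) applied to the adjoint (which has the same structural form), yielding $\|e\|_{L^2}\le e^{P(1/\varepsilon)} h\,\|De\|_{L^2}$; for $h$ small this is absorbed into the left-hand side, producing $\|e\|_{H^1(\Omega^h)}\le e^{P(1/\varepsilon)}\inf_{v_h\in V_h}\|u-v_h\|_{H^1(\Omega^h)}$, which is \eqref{abc}.

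The main obstacle I expect is keeping all the $\varepsilon$-dependencies honest and uniform: the ellipticity constant $\lambda$ degenerates like a negative power of $\varepsilon$ where $Du^\varepsilon$ is small, the lower-order coefficients $c_i$ blow up polynomially, the Alexandrov constant $c_0$ blows up exponentially, and one must verify that none of the elliptic-regularity or duality steps secretly introduces a constant depending on $u$ or $\varepsilon$ in an uncontrolled (non-$P$, non-$e^P$) way — in particular the $C^1$-norm of the $a_{ij}$, which enters the $H^2$ regularity constant, must be bounded by $P(1/\varepsilon)$ using the smoothness and $\|u^\varepsilon\|_{H^m}=P(1/\varepsilon)$ bounds. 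A secondary technical point is that the ABP/Alexandrov estimate is naturally stated for domains in $\mathbb{R}^{n+1}$ and requires the right-hand side in $L^{n+1}$, which is exactly why the hypothesis $D_if_i+g\in L^{n+1}(\tilde\Omega)$ is imposed; one should check its proof genuinely only needs the diameter of $\tilde\Omega$ (uniformly bounded for $\tilde\Omega=\Omega^h\subset\Omega_{\delta_0}$) and not convexity.
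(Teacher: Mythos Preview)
Your proposal is correct and follows essentially the same route the paper takes (by deferring to \cite{HK} and to Theorem~\ref{new_sup_estimate}): G\r{a}rding inequality plus the quantitative Alexandrov $L^\infty$-bound for the $H^1$ estimate, then standard elliptic regularity for $H^2$, and a C\'ea/Aubin--Nitsche duality argument for (ii). One small slip worth fixing: the ellipticity constant of $-a_{ij}=D_{z_i}D_{z_j}f_\varepsilon(Du^\varepsilon)$ is $\varepsilon^2/|Du^\varepsilon|_\varepsilon^3 = 1/P(1/\varepsilon)$, i.e.\ polynomial in $1/\varepsilon$, not $e^{-P(1/\varepsilon)}$ as you wrote---this matters, because the Alexandrov constant in Theorem~\ref{new_sup_estimate} depends like $\exp\big(P(1/\lambda)\cdot\|b\|_{L^{n+1}}^{n+1}\big)$, so an exponential $\lambda$ would yield a doubly exponential $c_0(\varepsilon)$ rather than the $e^{P(1/\varepsilon)}$ you (correctly) state.
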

\begin{proof}
(i) First note that existence and uniqueness of a solution $u$ follows from classical results in PDE theory.
These can be found in standard textbooks and more specifically adapted for us 
in \cite[Lemma 1]{MR2350186}. Note that the different exponent in the first order derivative term and its different sign in that reference compared to 
our case does not play a role concerning applicability of that result. Furthermore, $\tilde \Omega$ has in both cases a  $C^2$-regular boundary.

As an intermediate step we prove
\bea \label{241}
\|Du\|_{L^2(\tilde \Omega)}&\le  e^{ P(1/\varepsilon)}\|D_if_i+g\|_{L^3(\tilde \Omega)}.
\eea
This estimate follows from \cite[Lem. 7.1]{HK} replacing \cite[Thm. 6.2]{HK} therein by the $L^{\infty}$--estimate in
  Theorem~\ref{new_sup_estimate}. There, $D_if_i+g$ from our setting plays the role of $g$ in \cite[Lem.~7.1]{HK}, note that
  the two situations differ since we assume now higher regularity for $f_i$. 
Estimate \eqref{241-b} is a straightforward calculation by combining the standard proof
for higher regularity, see \cite{MR737190},  with the estimate~\eqref{241}.

(ii) Existence and uniqueness of $u_h$ follows as in the proof of \cite[Lemma 2]{MR2350186} by using the Schatz argument. Note that our finite element ansatz space is especially $H^1$ conforming (since even $H^2$ conforming) so that standard theory for $H^1$ conforming finite element theory carries over to our setting.

The proof follows exactly the lines in \cite[estimate~(7.12)]{HK} using  \eqref{241-b} instead of \cite[(7.10)]{HK}; we remark that in the latter reference cubic elements are used, however, since here we have ansatz functions of higher polynomial degree and a boundary approximation as given in \eqref{851}, the situation here is at least as convenient as before and allows the application of the former arguments. 
\end{proof}

\begin{theorem} \label{221}
Let $u$ be the unique solution of  \eqref{201} in $\Omega^h$ where $L=L_{\epsilon}$.
Then, there is $h_0>0$ so that for $0<h \le h_0$ 
there exists a unique finite element solution $u_h\in V_h$ of  \eqref{L-discrt} in $\Omega^h$ satisfying
\beq
\|u_h\|_{H^2(\Omega^h)} \le e^{ P(1/\varepsilon)}\|D_if_i+g\|_{L^3(\Omega^h)}.
\eeq

\if{
\beq \label{240}
\|u-u_h\|_{H^2(\Omega^h)} \le e^{ P(1/\varepsilon)} \inf_{v_h\in V_h}\|u-v_h\|_{H^2(\Omega^h)}
\eeq
and 
\beq \label{242}
\|u_h\|_{H^2(\Omega^h)} \le e^{ P(1/\varepsilon)} \|u\|_{H^2(\Omega^h)}.
\eeq
}\fi
\end{theorem}
\begin{proof}
\if{Let $\mathcal{I}_h\colon H^2(\Omega)\rightarrow  V_h$ be the nodal interpolant (XXX); by the embedding $H^2(\Omega) \subset C^0(\Omega)$ it is well-defined (XXX Dimension).
}\fi
Existence and uniqueness of the solution $u$ follows from Lemma \ref{220} (i).
We recall that $\mathcal I_h$ is the interpolation operator introduced in Assumption~\ref{hyp2}.
Setting \red{$e_h:=u-u_h$} we have 
\bea
 \norm{e_h}{H^2(\Omega^h)} &\le \norm{u -\mathcal{I}_h u}{H^2(\Omega^h)} +\norm{u_h -\mathcal{I}_h u}{H^2(\Omega^h)}\\
 & \le \norm{u -\mathcal{I}_h u}{H^2(\Omega^h)} + ch^{-1} \left(\norm{u -\mathcal{I}_h u}{H^1(\Omega^h)} + \norm{e_h}{H^1(\Omega^h)} \right) \\
 & \le \norm{u -\mathcal{I}_h u}{H^2(\Omega^h)} +  c \norm{u }{H^2(\Omega^h)} + ch^{-1}\norm{e_h}{H^1(\Omega^h)}.
 \eea
 Since by \eqref{abc}
 \bea
 \norm{e_h}{H^1(\Omega^h)} \le e^{ P(1/\varepsilon)} \norm{u - \mathcal{I}_h u}{H^1(\Omega^h)},
 \eea
 we have by interpolation estimate~\eqref{866} that
 \bea
 \norm{e_h}{H^2(\Omega^h)} &\le  e^{ P(1/\varepsilon)} \norm{u}{H^2(\Omega^h)}
 \eea
 and we conclude by the triangle inequality and stability estimate \eqref{241-b}.

\if{See the proof of \cite[Thm. 7.3]{HK} and use therein Lemma \ref{220} instead of the analogous \cite[Lem.~7.1]{HK} in order to obtain the polynomial constants $e^{ P(1/\varepsilon)}$.
This leads to \eqref{240} and \eqref{242} with $H^2(\Omega^h)$--norms replaced by $H^1(\Omega^h)$--norms at every place. In order to show \eqref{240} and \eqref{242} one has
}\fi
\end{proof}

\if{
\begin{remark} \label{823}
In the situation of Theorem \ref{221} holds
\beq
\|u_h\|_{H^2(\Omega^h)} \le e^{ P(1/\varepsilon)}\|D_if_i+g\|_{L^3}.
\eeq
\end{remark}
\begin{proof}
This follows immediately from Theorem \ref{221}, cf. also with the proof of \cite[Corollary 7.4]{HK}.
\end{proof}
}\fi


\section{Banach's fixed point theorem in $W^{2, \mu}$ balls with radii given explicitly in terms of $h$ and $\varepsilon$} \label{Section_FE}

To obtain existence and uniqueness of a solution $u^{\varepsilon}_h \in \bar B^h_{\rho}$  (defined in \eqref{912}) of \eqref{930} we proceed as follows: By identifying this solution with the unique fixed point of the mapping
$T: V_h\rightarrow V_h$ defined by
\beq \label{204}
L_{\varepsilon} (w_h-T(w_h)) = \Phi_{\varepsilon}(w_h), \quad w_h \in V_h
\eeq
with the operator $\Phi_{\varepsilon}$ given by
\begin{align}
 \Phi_{\varepsilon}\colon H^1_0(\Omega^h) \rightarrow H^{-1}(\Omega^h),\quad \Phi_{\varepsilon}(v):=-D_i\left(\frac{D_i v}{|Dv|_{\varepsilon}}\right) + \eta (|Dv|_{\varepsilon})
\end{align}
and where we understand equation \eqref{204} (and in the sequel analogous equations) in the finite element solution sense according to \eqref{L-discrt}. 
Uniqueness and existence of the fixed point in $ B^h_{\rho}$ follows by Banach's fixed point theorem. We will check the standard assumptions of the fixed point theorem in a quantitative way with respect to constants, more precisely the following selection of three sufficient conditions:

\textbf{(i)} Non-emptyness:
\beq \label{913}
\bar B^h_{\rho} \neq \emptyset.
\eeq

\textbf{(ii)} Contraction property: For $\mu \ge 2$  and some $\eta>0$ we have
\beq \label{914}
\begin{aligned}
\|T(w_h)&-T(v_h)\|_{W^{2, \mu}( \Omega^h)} \le e^{P(1/\varepsilon)} h^{\eta} \|w_h-v_h\|_{W^{2, \mu} (\Omega^h)}  \quad \forall w_h, v_h \in \bar B^h_{\rho}.
\end{aligned}
\eeq

\textbf{(iii)} Self-mapping:
\beq \label{915}
T({\bar B^{h}_{\rho}}) \subset {\bar B^{h}_{\rho}}.
\eeq


Next, we check conditions \textbf{(i)} to \textbf{(iii)}: 

\textbf{(i)}: \red{By interpolation estimate \eqref{866} for given $\rho$ one can always find an $\tilde{h}>0$ such that $\mathcal{I}_{\tilde{h}} u^{\varepsilon} \in  B^h_{\rho}$} (since $\delta < \deg-2$).

\if{The condition follows from our assumed sufficiently good interpolation operator
\begin{equation}
\mathcal{I}_h:H^{\deg}(\bar \Omega^h)\rightarrow \tilde V_h,
\end{equation}
namely (see \cite[equation (1.103)]{MR2050138})
\beq \label{866}
\|u-\mathcal{I}_hu\|_{W^{{\deg}-1-m,\infty}(\Omega^h)} \le ch^{m+1}\|u\|_{H^{{\deg}}(\bar \Omega^h)} \quad \forall \; u \in H^{{\deg}}(\bar \Omega ^h)
\eeq
where $m =1, 2$. 
}\fi
\textbf{(ii)}: We show an estimate for $\|T(v_h)- T(w_h)\|_{W^{2,\mu}(\Omega^h)}$ with $\mu\ge 2$. 
Let $v_h$ and $w_h$ be in $\bar B^{h}_{\rho}$, $\xi_h = v_h-w_h$, $\al(t)=w_h + t\xi_h$, $0\le t \le 1$. In view of \eqref{204} we have
\beq \label{501}
L_{\varepsilon} (T(v_h)-T(w_h))  = L_{\varepsilon}\xi_h + \Phi_{\varepsilon}(w_h)-\Phi_{\varepsilon}(v_h).
\eeq
 Recalling the convention that when $\eta$ and $f_{\varepsilon}$ have no arguments it is meant $ \eta=\eta(|Du^{\varepsilon}|_{\varepsilon})$ and $f_{\varepsilon}=f_{\varepsilon}(D u^{\varepsilon})$ the right-hand side of \eqref{501} is of the form \red{$D_if_{i}+g\in L^3$} with (see \cite{HK})
\bea \label{868}
f_i &= D_{z_i}f_{\varepsilon}(D v_h)-D_{z_i}f_{\varepsilon}(D w_h)-D_{z_i}D_{z_m}f_{\varepsilon}D_m \xi_h \\
&= \int_0^1\left(D_{z_m}D_{z_i}f_{\varepsilon}(D \al(t))- D_{z_i}D_{z_m}f_{\varepsilon}\right) D_m \xi_h
\eea
and (using the convention that $\eta'$ denotes the derivative of the function $r \mapsto \eta(r)$
and that the argument of $f_{\varepsilon}$ if omitted is understood to be $Du^{\varepsilon}$) we have
\bea \label{867}
g&=\eta'D_{z_m}f_{\varepsilon}D_m \xi_h + \eta\left(f_{\varepsilon}(D w_h)\right)- \eta\left(f_{\varepsilon}(D v_h)\right)\\
&= \int_0^1 \left(\eta'D_{z_m}f_{\varepsilon}-
\eta'\left(f_{\varepsilon}(D \al(t))\right)D_{z_m}f_{\varepsilon}(D \al(t))\right)D_m \xi_h.
\eea
Since the finite element space is $H^2$--conforming we may rewrite $D_if_i$
by performing the differentiation and get with the abbreviation
\bea
G(t):= D_{z_r}D_{z_i}f_{\varepsilon}(D\alpha(t))D_iD_r\alpha(t)
\eea
that
\bea \label{4.12}
D_{i}f_i&=
D_{z_r}D_{z_i}f_{\varepsilon}(D\alpha(1))D_iD_r\alpha(1)
-D_{z_r}D_{z_i}f_{\varepsilon}(D\alpha(0))D_iD_r\alpha(0)\\
&\quad - D_{z_r}D_{z_i}D_{z_m}f_{\varepsilon}D_{i}D_{r}u^{\varepsilon}D_m\xi_h
- D_{z_i}D_{z_m}f_{\varepsilon}D_iD_m\xi_h \\
&= \int_0^1\frac{\dd}{\dd t}G(t)- D_{z_r}D_{z_i}D_{z_m}f_{\varepsilon}D_m\xi_hD_{i}D_{r}u^{\varepsilon}- D_{z_i}D_{z_m}f_{\varepsilon}D_iD_m\xi_h.
\eea
Moreover, we have
\bea\label{510-1}
\frac{\dd}{\dd t}G(t)&= D_{z_m}D_{z_r}D_{z_i}f_{\varepsilon}(D\alpha(t))
D_m\xi_hD_iD_r\alpha(t)\\
&\quad + D_{z_r}D_{z_i}f_{\varepsilon}(D\alpha(t))D_iD_r\xi_h.
\eea
We rewrite the first term in $\frac{\dd}{\dd t}G(t)$ by using the identity
\bea
ABC=(A-a)(B-b)C+(A-a)bC+a(B-b)C+abC
\eea
for real numbers $A, B, C, a, b$. With 
\begin{equation}
\begin{aligned}
A&:= D_{z_m}D_{z_r}D_{z_i}f_{\varepsilon}(D\alpha(t));& B&:=D_iD_r\alpha(t); \quad C:= D_m\xi_h;\\
a&:=D_{z_m}D_{z_r}D_{z_i}f_{\varepsilon}(Du^{\varepsilon}) ;& b&:= D_iD_ru^{\varepsilon}
\end{aligned}
\end{equation}
 we obtain
\bea \label{expansion_G}
\frac{\dd}{\dd t}G(t)&= \left(D_{z_m}D_{z_r}D_{z_i}f_{\varepsilon}(D\alpha(t))
-D_{z_m}D_{z_r}D_{z_i}f_{\varepsilon}(Du^{\varepsilon})\right)
(D_iD_r\alpha(t)-D_iD_ru^{\varepsilon})D_m\xi_h\\
&\quad + \left(D_{z_m}D_{z_r}D_{z_i}f_{\varepsilon}(D\alpha(t))
-D_{z_m}D_{z_r}D_{z_i}f_{\varepsilon}(Du^{\varepsilon})\right)
D_iD_ru^{\varepsilon}D_m\xi_h\\
&\quad + D_{z_m}D_{z_r}D_{z_i}f_{\varepsilon}(Du^{\varepsilon})
(D_iD_r\alpha(t)-D_iD_ru^{\varepsilon})D_m\xi_h\\
&\quad + D_{z_m}D_{z_r}D_{z_i}f_{\varepsilon}(Du^{\varepsilon})
D_iD_ru^{\varepsilon}D_m\xi_h \\
&\quad + \left(D_{z_r}D_{z_i}f_{\varepsilon}(D\alpha(t))-D_{z_r}D_{z_i}f_{\varepsilon}(Du^{\varepsilon})\right)D_iD_r\xi_h + 
D_{z_r}D_{z_i}f_{\varepsilon}(Du^{\varepsilon})D_iD_r\xi_h.
\eea
Due to cancellations with terms in \eqref{510-1} and commutation of derivatives we obtain
\bea \label{expansion_Difi}
D_if_i&= \int_0^1\bigg[\left(D_{z_m}D_{z_r}D_{z_i}f_{\varepsilon}(D\alpha(t))
-D_{z_m}D_{z_r}D_{z_i}f_{\varepsilon}(Du^{\varepsilon})\right)
D_m\xi_h(D_iD_r\alpha(t)-D_iD_ru^{\varepsilon})\\
&\quad + \left(D_{z_m}D_{z_r}D_{z_i}f_{\varepsilon}(D\alpha(t))
-D_{z_m}D_{z_r}D_{z_i}f_{\varepsilon}(Du^{\varepsilon})\right)
D_m\xi_hD_iD_ru^{\varepsilon}\\
&\quad + D_{z_m}D_{z_r}D_{z_i}f_{\varepsilon}(Du^{\varepsilon})
D_m\xi_h(D_iD_r\alpha(t)-D_iD_ru^{\varepsilon})\\
&\quad + \left(D_{z_r}D_{z_i}f_{\varepsilon}(D\alpha(t))-D_{z_r}D_{z_i}f_{\varepsilon}(Du^{\varepsilon})\right)D_iD_r\xi_h\bigg]. 
\eea

\red{
To estimate the $W^{2,\mu}(\Omega^h)$--norm of $u_h:=T(v_h)- T(w_h)$ we proceed as follows: As an upper bound  we obtain the product of the $H^2(\Omega^h)$--norm of $u_h$ and some negative powers of $h$ by applying inverse estimates. Then using Theorem \ref{221} applied to equation \eqref{501} with  $g$ and $D_if_i$ as expressed in \eqref{867} and \eqref{expansion_Difi} we can further estimate $\norm{u_h}{H^2(\Omega^h)}$ from above in terms of the $L^3(\Omega^h)$ norm of $g+D_if_i$ which can be further estimated by the triangle inequality, 
 the interpolation estimate \eqref{866}, and the inverse estimate \eqref{inv-est}.
We begin this with an auxiliary estimate stated in the following.}
\red{
 Setting 
\begin{align}
 \pi(h,\varepsilon):=h^{-\frac{3}{\mu}} (\rho + h^{{\deg}-2}\|u^{\varepsilon}\|_{C^{{\deg}}(\bar \Omega^h)})
+h^{{\deg}-2}\|u^{\varepsilon}\|_{C^{{\deg}}(\bar \Omega^h)}
\end{align}
we have
\if{\begin{equation} \label{5.15}
\begin{aligned}
\|w_h-u^{\varepsilon}\|_{W^{2,\infty}(\Omega^h)}  \le  c\pi(h,\varepsilon).
\end{aligned}
\end{equation}
}\fi
}
\red{
\begin{equation} \label{8.16}
 \begin{aligned}
\|w_h-u^{\varepsilon}\|_{W^{2,\infty}(\Omega^h)} 
&\le
\|w_h-\mathcal{I}_h u^{\varepsilon}\|_{W^{2,\infty}(\Omega^h)}+\|\mathcal{I}_hu^{\varepsilon}-u^{\varepsilon}\|_{W^{2,\infty}(\Omega^h)}  \\
&\le   ch^{-\frac{3}{\mu}}\left(\|w_h-u^{\varepsilon}\|_{W^{2,\mu}(\Omega^h)}
+\|u^{\varepsilon}-\mathcal{I}_h u^{\varepsilon}\|_{W^{2,\mu}(\Omega^h)}\right) \\
& \quad +ch^{{\deg}-2}\|u^{\varepsilon}\|_{C^{{\deg}}(\bar \Omega^h)}\\
&\le c\pi(h,\varepsilon).
\end{aligned}
\end{equation}
where we used in this order: the triangle inequality, the inverse estimate \eqref{inv-est} together with the triangle inequality, and then the  interpolation estimate  \eqref{866}.
Note that clearly the same inequality holds with $w_h$ replaced by $v_h$.}
}

\if{Clearly, since $\rho$ plays the role of the radius of the ball $\bar B_{\rho}^h$ in which we confirm the assumptions of Banach's fixed point theorem (and we are interested in the asymptotic regime of the parameter values) it will naturally have a small value so that we may assume w.l.o.g. its boundedness by a moderate constant, e.g. $\rho<1$, which will be used for the following tacitly (and will vanish in generic constants).}\fi

\red{
Next, we define
\begin{equation}
\begin{aligned}
d_{v,1}&:= D(v_h-u^{\varepsilon}),&
d_{w,1}&:= D(w_h-u^{\varepsilon}),\\
d_{v,2}&:= D^2(v_h-u^{\varepsilon}),&
d_{w,2}&:= D^2(w_h-u^{\varepsilon}).
\end{aligned}
\end{equation}
}

\red{
With \eqref{expansion_Difi} we obtain
\begin{equation} \label{estmate_f}
 \begin{aligned}  
\|D_if_i\|_{L^3(\Omega^h)}
 & \le  2e^{ P(1/\varepsilon)} \Pi_{i=1,2}
 (\|d_{w,i}\|_{L^{\infty}(\Omega^h)}+\|d_{v,i}\|_{L^{\infty}(\Omega^h)})  \|D\xi_h\|_{L^3(\Omega^h)} \\
&\quad + e^{ P(1/\varepsilon)}(\|d_{w,1}\|_{L^{\infty}(\Omega^h)}
+\|d_{v,1}\|_{L^{\infty}(\Omega^h)})
\|D^2\xi_h\|_{L^3(\Omega^h)}.
 \end{aligned}
\end{equation}
Furthermore, we have
\begin{equation} \label{estmate_g}
\|g\|_{L^3(\Omega^h)} \le e^{ P(1/\varepsilon)} \left( \|d_{w,1}\|_{L^{\infty}(\Omega^h)}+\|d_{v,1}\|_{L^{\infty}(\Omega^h)}\right)\|D\xi_h\|_{L^3(\Omega^h)}.
\end{equation}
We estimate the right-hand sides of \eqref{estmate_f} and \eqref{estmate_g} further from above in terms of $\rho$, $h$ and $\varepsilon$
which will \red{be} done by relating the appearing $L^3(\Omega^h)$ norms to $L^{\mu}(\Omega^h)$ norms.
More precisely, we apply  \eqref{8.16} and obtain
\begin{equation} \label{5.20}
\begin{aligned}
\|D_if_i \|_{L^3(\Omega^h)} & \le 
e^{ P(1/\varepsilon)} \pi(h,\varepsilon)h^{\min\left(0,1-\frac{3}{\mu}\right)} \norm{\xi_h}{W^{2,\mu}(\Omega^h)} =: A  \norm{\xi_h}{W^{2,\mu}(\Omega^h)}
\end{aligned}
\end{equation}
where we assume that $\pi(h,\varepsilon)<1$; this will be automatically the case due to the stronger inequality \eqref{sufficient_contraction}.
}
\red{
We observe that  
 $ \|g\|_{L^3(\Omega^h)}$ can be estimated from above also by the right-hand side of~\eqref{5.20}.
By Theorem \ref{221}, an inverse estimate, and using $\mu\ge 2$ we obtain
\begin{equation} \label{870}
 \begin{aligned}
  \|T(v_h)- T(w_h)\|_{W^{2,\mu}(\Omega^h)} &\le h^{\frac{3}{\mu}-\frac{3}{2}} \|T(v_h)- T(w_h)\|_{H^2(\Omega^h)} \le \omega \norm{\xi_h}{W^{2,\mu}(\Omega^h)}
 \end{aligned}
\end{equation}
with
\begin{align}
  \omega:= e^{ P(1/\varepsilon)}h^{\frac{3}{\mu}-\frac{3}{2}} A.
\end{align}
Sufficient for $T$ to satisfy \eqref{914} is that 
\begin{align}\label{sufficient_contraction}
\omega <1 \quad \text{for small $h$}.
\end{align}
We get as sufficient conditions for 
\eqref{sufficient_contraction}  that 
\begin{equation}\label{suff1a}
\begin{aligned}
\delta &> 3/2 &&\text {if } \mu > 3, \\
\delta &> 1/2+3/\mu&&\text {if } 2 \le \mu \le 3
\end{aligned}
\end{equation}
implying the contraction property \eqref{914}.
Note that $\delta$ is the power of $h$ in the definition of $\rho$ and that $\operatorname{deg}\ge 8$ by Remark \ref{rem:101}; we also used $\operatorname{deg}>2+\delta$, which will be replaced by a stronger condition later in the proof.
}

\textbf{(iii)}: 
\red{We recall the definition of $V_h$ and $\tilde{V}_h$ given in and after Assumption \ref{hyp1}, as well as that $N_h$ denotes the set of nodes in $\mathcal{T}_h$.}
In order to prove \eqref{915}
we choose $z_h \in \tilde V_h$  (in a not unique way) by requiring that 
\beq \label{z_h_defi}
z_h = \begin{cases}\mathcal{I}_hu^{\varepsilon} \quad &\text{in } \partial \Omega^h, \\
0 \quad &\text{in } N_h \backslash \partial \Omega^h \end{cases}
\eeq
and
\beq \label{z_h_2}
  \|z_h\|_{L^{\infty}(\Omega^h)}\le c \|{\mathcal{I}_h u^{\varepsilon}}_{|\partial \Omega^h}\|_{L^{\infty}(\partial \Omega^h)}\le {P(1/\varepsilon)}h^{\widetilde \deg},
\eeq
where the last inequality follows from a first order Taylor's expansion of $u^{\varepsilon}$ in view of the boundary approximation property, cf. Assumption~\ref{hyp2}.
We set
\beq \label{883}
\tilde u^{\varepsilon}:= \mathcal{I}_hu^{\varepsilon}-z_h.
\eeq
Then $\tilde u^{\varepsilon}\in V_h$ and for all $1\le q \le \infty$ 
\beq \label{901}
\|\tilde u^{\varepsilon}-u^{\varepsilon}\|_{W^{2,q}(\Omega^h)} \le c h^{{\widetilde \deg} -2+\frac{\widetilde \deg}{q}}\|u^{\varepsilon}\|_{C^{\deg}(\bar \Omega^h)} 
\eeq
which follows from the standard interpolation error estimate and the consideration at the boundary. 
For it  we repeat on the zero order level the estimate for $z_h$ from above, cf. \eqref{z_h_2}, and, furthermore, derive by using inverse estimates the following estimates
\begin{equation}
\begin{aligned}
\|z_h\|_{L^{\infty}( \Omega^h)} &\le {P(1/\varepsilon)}h^{{\widetilde \deg}},\\
\|Dz_h\|_{L^{\infty}(\Omega^h)} &\le {P(1/\varepsilon)} h^{{\widetilde \deg} -1},\\
\|D^2z_h\|_{L^{\infty}(\Omega^h)} &\le  {P(1/\varepsilon)}h^{{\widetilde \deg} -2}.
\end{aligned}
\end{equation}
We conclude that $\tilde u^{\varepsilon} \in \bar B^h_{\rho}$ provided $h_0e^{ P(1/\varepsilon)}<1$
and 
\beq\label{sufficient2b}
{\widetilde \deg}-2+\widetilde \deg/\mu>\delta.
\eeq
By using the triangle inequality we have
\bea
\|T(w_h)-u^{\varepsilon}\|_{W^{2, \mu}(\Omega^h)} &\le 
\|T(w_h)-T(\tilde u^{\varepsilon})\|_{W^{2, \mu}(\Omega^h)} +\|T(\tilde u^{\varepsilon})-\tilde u^{\varepsilon}\|_{W^{2, \mu}(\Omega^h)} \\
&\quad +\|\tilde u^{\varepsilon}-u^{\varepsilon}\|_{W^{2, \mu}(\Omega^h)}.
\eea 
We consider all three terms separately.

For the first term we use the contraction property \eqref{914}
and deduce that 
\bea \label{916_}
\|T(w_h-\tilde u^{\varepsilon})\|_{W^{2, \mu}(\Omega^h)} &\le
c h^{\eta} \|w_h-\tilde u^{\varepsilon}\|_{W^{2, \mu}(\Omega^h)} \\
&\le c h^{\eta} \|w_h- u^{\varepsilon}\|_{W^{2, \mu}(\Omega^h)} 
+c h^{\eta} \|u^{\varepsilon}-\tilde u^{\varepsilon}\|_{W^{2, \mu}(\Omega^h)} \\
&\le ch^{\eta}\rho + c h^{\eta+{\widetilde \deg}-2+\frac{\widetilde \deg}{\mu}}\|u^{\varepsilon}\|_{C^{\deg}(\bar \Omega^h)}.
\eea

For the second term we set $\xi = u^{\varepsilon}-\tilde u^{\varepsilon}$, $\al (t)= \tilde u^{\varepsilon} + t \xi$ for $0\le t\le1$.
Then, we have in $\Omega^h$
\bea\label{533}
L_{\varepsilon}\left(\tilde u^{\varepsilon}-T(\tilde u^{\varepsilon})\right) &= \Phi_{\varepsilon}(\tilde u^{\varepsilon}) \\
&= \Phi_{\varepsilon}(\tilde u^{\varepsilon}) -\Phi_{\varepsilon}(u^{\varepsilon}) + \Phi_{\varepsilon}(u^{\varepsilon})
\eea
and the right-hand side of this equation is of the form $D_if_i+g\in L^3$ with
\bea
f_i &= -D_{z_i}f_{\varepsilon}(D \tilde u^{\varepsilon}) + D_{z_i}f_{\varepsilon}(D u^{\varepsilon}); \quad 
g = \eta\left(f_{\varepsilon}(D\tilde u^{\epsilon})\right)-\eta\left(f_{\varepsilon}(Du^{\varepsilon})\right).
\eea
Since the finite element space is $H^2$--conforming we may rewrite $D_if_i$
by performing the differentiation and obtain with
\bea
G(t):= D_{z_r}D_{z_i}f_{\varepsilon}(D\alpha(t))D_iD_r\alpha(t)
\eea
that
\bea
D_{z_r}f_i&=
D_{z_r}D_{z_i}f_{\varepsilon}(D\alpha(1))D_iD_r\alpha(1)
-D_{z_r}D_{z_i}f_{\varepsilon}(D\alpha(0))D_iD_r\alpha(0)= \int_0^1\frac{\dd}{\dd t}G(t).
\eea
We have
\bea
\frac{\dd}{\dd t}G(t)&= D_{z_m}D_{z_r}D_{z_i}f_{\varepsilon}(D\alpha(t))
D_m\xi D_iD_r\alpha(t)\\
&\quad + D_{z_r}D_{z_i}f_{\varepsilon}(D\alpha(t))D_iD_r\xi,
\eea
respectively, for a more detailed expansion  see \eqref{expansion_G} now with $w_h$ replaced by $\tilde u^{\varepsilon}$ and $v_h$ by $u^{\varepsilon}$. 
Applying \eqref{estmate_f} and \eqref{estmate_g}  with these replacements, inclusively using the definition of $\xi_h$, we obtain 
\begin{equation}\label{538}
\|D_if_i + g\|_{L^3(\Omega^h)}
\le e^{ P(1/\varepsilon)}
\|u^{\varepsilon}-\tilde u^{\varepsilon}\|_{W^{2,3}(\Omega^h)}
\end{equation}
and hence, using an inverse estimate combined with the stability estimate from Theorem \ref{221} for equation \eqref{533} we obtain
\bea
\|\tilde u^{\varepsilon}-T(\tilde u^{\varepsilon})\|_{W^{2,\mu}(\Omega^h)} &\le
e^{ P(1/\varepsilon)}h^{ \frac{3}{\mu} - \frac{3}{2}} \|D_if_i + g\|_{L^3(\Omega^h)}
\eea
Then, with \eqref{538} and \eqref{901} we get
\bea\label{539}
\|\tilde u^{\varepsilon}-T(\tilde u^{\varepsilon})\|_{W^{2,\mu}(\Omega^h)}&\le e^{ P(1/\varepsilon)}h^{ \frac{3}{\mu} - \frac{3}{2}}\|u^{\varepsilon}-\tilde u^{\varepsilon}\|_{W^{2,3}(\Omega^h)} \\
&\le  e^{ P(1/\varepsilon)}h^{ \frac{3}{\mu} - \frac{3}{2}+{\widetilde \deg} -2+\frac{\widetilde \deg}{3}}\|u^{\varepsilon}\|_{C^{\deg}(\bar \Omega^h)}.
\eea

For the third term we also apply estimate \eqref{901}.

Since the powers of $h$ in \eqref{916_} and \eqref{901} are obviously positive (note that $\tilde{\deg}$ is bounded from below), a sufficient condition for \eqref{915} is given by \eqref{sufficient2b} and 
\begin{align} \label{sufficient2a}
\frac{3}{\mu}-\frac{3}{2}+\widetilde{\deg}-2+\frac{\widetilde \deg}{3} > \delta.
\end{align}
This finishes the proof of part (iii).

We collect sufficient conditions from the  parts (i)--(iii) of the proof, namely, \eqref{suff1a}, \eqref{sufficient2b} and \eqref{sufficient2a} which give (in three space dimensions) the following sufficient condition:

\red{
(a) Case $\mu > 3$:  We have, equivalently to  \eqref{suff1a}, \eqref{sufficient2b} and \eqref{sufficient2a},
\begin{align}\label{345}
\min\left({\widetilde \deg} -2 +\frac{\widetilde \deg}{\mu},\frac{3}{\mu} -\frac{7}{2} + \frac{4}{3} \widetilde{\deg}\right)>\delta>\frac{3}{2}.
\end{align}
Thus, the conditions in \eqref{345} on $\widetilde{deg}$ and $\mu$  can be equivalently rewritten as
\begin{align}\label{1-2}
 {\widetilde \deg} > 7/(2/\mu + 2),\quad \text{and} \quad \widetilde{\deg} > 
 -9/(4\mu) + 15/4.
\end{align}
 For  $\widetilde{\deg}\ge 4$ all $\mu > 3$ satisfy \eqref{1-2}. For $\widetilde{deg} \ge 3$ condition \eqref{1-2} cannot be satisfied.
}

\red{
(b) Case $2\le \mu\le 3$: Here, we have, equivalently to  \eqref{suff1a}, \eqref{sufficient2b} and \eqref{sufficient2a},
\begin{align}\label{545}
\min\left({\widetilde \deg} -2 +\frac{\widetilde \deg}{\mu},\frac{3}{\mu} -\frac{7}{2} + \frac{4}{3} \widetilde{\deg}\right)>\delta> \frac{3}{\mu} + \frac{1}{2}.
\end{align}
Thus, the conditions in \eqref{545} on $\widetilde{deg}$ and $\mu$  can be equivalently rewritten as
\begin{align}\label{1-2-3}
 {\widetilde \deg} > 3/(\mu+1) + 5\mu/(2(\mu+1)),\quad \text{and} \quad \widetilde{\deg} > 
 3.
\end{align}
Discussing the function on the right hand side of the first inequality in the variable $\mu$ we observe that the second condition is the restrictive one; we may choose any $\widetilde{deg}\ge 4$ and  $\mu \in [2,3]$.
}

This  completes the proof of Theorem \ref{main_thm_endversion}.

\if{\section{Conclusion}
This paper improves upper bounds for constants in error estimates for the finite element approximation of regularized elliptic geometric evolution equations. The improved bounds depend polynomially and not exponentially as in \cite{HK} (where explicit bounds at all where derived for the first time) on the inverse regularization parameter. Compared to that reference we now use $H^2$ conforming finite elements (instead of $H^1$ conforming) as main difference which puts us into the setting of the Alexandrov weak maximum principle and which is more realistic from the point of view that these equations originate from modeling the evolution of level sets. This is meant in the sense that $H^2$ conformity of the finite elements 
naturally leads to error bounds on the $H^2$ level which by embedding theorems lead to $C^1$ error estimates and the latter are (almost) required to find level sets. Note that  in general the considered solutions exist only in $C^{0,1}$ but since error estimates arise in form of Sobolev norms of type $W^{m,p}$ with $p< \infty$, the $m\ge 2$ case is of natural interest in order to embed into $C^{0,1}$.
Possible polynomial bounds in the  $H^1$ case are considered for future work.
}\fi

\section*{Acknowledgement}
The second author  has been funded by the Deutsche Forschungsgemeinschaft (DFG,
 German Research Foundation)-Projektnummer: 404870139.
 
 We thank the anonymous referees for their careful proofreading.

 \appendix \label{app1}
 \section{On the realization of Assumptions \ref{hyp1} and \ref{hyp2}}\label{sec:real}

This section is devoted to the explanation that Assumptions  \ref{hyp1} and \ref{hyp2} can be indeed realized. 
We recall first a statement on a polyhedral domain in Lemma~\ref{lem:Zensieck_} using the realization of $H^2$--conforming finite elements from~\cite{MR350260} and general interpolation estimates from~\cite{MR2050138}. In a second step we extend this result to smooth domains as introduced in Section~\ref{sec:main_result}. For this purpose we vary the definition of $\Omega$ within this Appendix \ref{sec:real}.
At first we recall some results from \cite{MR350260} for which we need the following notation:
%
%

Let $ \Omega \subset \mathbb{R}^3$ be a bounded simply or multiply connected domain with boundary $F$ consisting of a finite number of polyhedrons $\Gamma_i$ $(i = 0,\dots, s)$ with $\Gamma_1 ,\dots, \Gamma_s$  lying inside of $\Gamma_0$ and having no intersection. 
Let $\Mcc$ be a set of a finite number of closed tetrahedrons having the following properties: (1) the union of all tetrahedrons is~$\bar \Omega$; (2) two arbitrary tetrahedrons are either disjoint or have a common vertex or a common edge or a common face.
Let $N_t$, $N_v$, and $N_f$ be the total numbers of the tetrahedrons, of the vertices and of the triangular faces in the partition $\Mcc$, respectively. Let the tetrahedrons of $\Mcc$ be denoted by $U_i$ $(i = l,\dots, N_t)$, the vertices by $P_i$ $(i = l,\dots, N_v)$, and the triangular faces by $T_i$ $(i = I,\dots, N_f)$. Let $Q_i$ denote the center of gravity of $T_i$, and $P_{(0)}^i$ the center of gravity of $U_i$.  $Q_{jk}^{(1,s)},\dots, Q_{jk}^{(s,s)}$ denote the points dividing the segment
$\langle P_j,P_k\rangle$ into $s + 1$ equal parts.
The normal to the triangular face whose center of
gravity is $Q_i$ is denoted by $n_i$. We orientate $n_i$ according to the right-hand
screw rule with respect to the increasing indices $j < k < l$ of the vertices $P_j$,
$P_k$, $P_l$ of the face. 

The symbols $s_i$ and $t_i$ denote arbitrary but fixed directions with $n_i$, $s_i$, $t_i$ perpendicular to one another,
 while $s_{jk}$, $t_{jk}$ denote fixed directions such that the directions $P_jP_k$, $s_{jk}$, $t_{jk}$ are perpendicular to one another.

\if{
For the realization of Assumptions  \ref{hyp1} and \ref{hyp2} we recall results from \cite{MR350260} where $H^2$--conforming polynomial approximations on tetrahedrons in three dimensions are considered and error estimates are derived. However, we remark that in that reference the considered domain is polygonal and not smooth as in our setting.
}\fi

We set $D^{\alpha}f:=\partial^{|\alpha|_1}/\partial^{\alpha_1} \partial^{\alpha_2} \partial^{\alpha_3}f$, and for $\beta=(\beta_1,\beta_2)$ and $|\beta|_1=\beta_1+\beta_2$ we define $D^{\beta}_{jk} f:=\partial^{|\beta|_1}/\partial s_{jk}^{\beta_1}\partial t_{jk}^{\beta_2}f$. We denote by  
 $\partial f/\partial s_i$, $\partial f/\partial t_i$, $\partial f/\partial s_{jk}$, and $\partial f/\partial t_{jk}$  the derivatives of the function $f$ in the directions $s_i$, $t_i$, $s_{jk}$, and $t_{jk}$.
 \if{
  Let there be prescribed at each point $P_i$ thirty-five values $D^{\alpha}f(P_i)$ $(| \alpha|_1\le 4)$, at each point $Q_{jk}^{(1,1)}$ two values $D_{jk}^{\beta}f(Q_{jk}^{1,1})$ $(|\beta|_1 = 1)$, at each point $Q_{jk}^{(r,2)}$ three values $D_{jk}^{\beta} f(Q_{jk}^{r,2})$ $(|\beta|_1=2$), at each point $P_0^{(i)}$ four values $D^{\alpha} f(P_0^{(i)})$ $( |\alpha|_1 \le  1)$ and at each point $Q_i$ one value $f(Q_i)$ and six values $D_i^{\beta} \partial f(Q_i)/\partial n_i$ $(|\beta|_1\le 2)$. Then on each tetrahedron $D_i$ this determines a unique polynomial of ninth degree $p_i(x, y, z)$.}\fi
  We introduce the following set of degrees of freedom characterizing a function $w\in C^1(\bar \Omega)$ being on each tetrahedron $D_i$ a  polynomial of degree $9$, see \cite[Thm. 2]{MR350260}:
\begin{equation}\label{conditions-poly}
 \left\{
 \begin{aligned}
&D^{\alpha}w(P_i) , \quad |\alpha|_1 \le 4; \\
&D^{\beta}_{jk} w(Q_{jk}^{(r,s)}) ,\quad |\beta|_1=s,\quad  r=1,\dots,s,\quad s=1,2; \\
& w(Q_i) ; \\
& D_i^{\beta}\left( \frac{\partial w(Q_i)}{\partial n_i}\right) ,\quad |\beta|_1\le 2; \\
& D^{\alpha}w(P_0^{(i)}) , |\alpha|_1 \le 1 
\end{aligned}
\right.
\end{equation}
where $i = 1,\dots, 4$, $j = 1,2, 3$, $k = 2,3,4$ $(j < k)$.
\if{By Corollary \cite[Cor. 1]{MR350260} a polynomial of the ninth degree is uniquely determined by the conditions 
\begin{equation}\label{conditions-poly}
 \left\{
 \begin{aligned}
&D^{\alpha}w(P_i) = 0, \quad |\alpha|_1 \le 4; \\
&D^{\beta}_{jk} w(Q_{jk}^{(r,s)}) = 0,\quad |\beta|_1=s,\quad  r=1,\dots,s,\quad s=1,2; \\
& w(Q_i) = 0; \\
& D_i^{\beta}\left( \frac{\partial w(Q_i)}{\partial n_i}\right) = 0,\quad |\beta|_1\le 2; \\
& D^{\alpha}{\alpha}w(P_0) = 0, |\alpha|_1 \le 1 
\end{aligned}
\right.
\end{equation}
with  $i = 1,\dots, 4$, $j = 1,2, 3$, $k = 2, 3,4$ $(j < k)$ and  $w=p-f$, $f$ a four-times continuously differentiable function on the tetrahedron $\bar{U}$. }\fi
Denoting  by $W_h$ the set of these functions, it is of dimension
\bea
\dim W_h=35N_v, + 7N_f, + 4N_t, + 8N_e
\eea
with $N_v$ the number of vertices, $N_f$ the number of triangular faces, $N_t$ the number of tedrahedrons, and $N_e$ the number of edges, and $h$ the mesh parameter.

We further remark that the simplest polynomial $p(x,y,z)$ on a tetrahedron which leads to piecewise polynomial functions which are globally continuously differentiable
is expected to be of order nine~\cite{MR350260}.

\begin{lemma}\label{lem:Zensieck_} (i) For $h>0$ the ansatz space 
\beq
\begin{aligned} \label{def_V_h}
V_h:=& \set*{
 w \in C^1(\bar \Omega) \; \given 
\begin{array}{l}\text{for all } T\in \mathcal{T}_h:\;\; w|_T \text{ polynomial of degree } 9\\ \text{characterized by \eqref{conditions-poly}}
 ,\quad 
  w_{|\partial \Omega^h}=0
  \end{array}
   }
\end{aligned}
\eeq
is $H^2$--conform.  

(ii) For given  $u \in W^{\deg,\infty}( \Omega )$ with $\deg$ as in Assumption \ref{hyp1}  and $0<h\le h_0$ there exists a function $\hat{\mathcal{I}}_hu:=\varphi\in V_h$ 
 having the same values at the points $P_i$,
$Q_{jk}^{(r,s)}$, $P_{0}^{(i)}$, $Q_i$ as $u$, such that for~$1\le p \le \infty$ that
\begin{equation}\label{866-2}
\begin{aligned}
\|u-\hat{\mathcal{I}}_hu\|_{W^{m,\infty}(\Omega)} \le ch^{\deg-m}\|u\|_{W^{{\deg,\infty}}( \Omega)}, \quad m =1, 2.
\end{aligned}
\end{equation}
\end{lemma}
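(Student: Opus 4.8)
The plan is to read off part (i) directly from the standard regularity criterion for cellwise polynomial functions, and to obtain part (ii) from the unisolvence of the local degrees of freedom on a single tetrahedron (as recorded in \cite{MR350260}) combined with a Bramble--Hilbert scaling argument; throughout I would import the facts about the degree-$9$ element from \cite{MR350260} and the general interpolation machinery from \cite{MR2050138} rather than reprove them.

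For part (i): by construction every $w\in V_h$ lies in $C^1(\bar\Omega)$ and is, cell by cell, a polynomial of degree $\le\deg=9$. A function with this property has square-integrable weak derivatives up to second order --- integrating by parts twice against a test function produces no interelement surface contributions precisely because the first derivatives are continuous across faces --- so $w\in H^2(\Omega)$; this is exactly the criterion \cite[Thm.~5.2]{MR2322235}. Together with $w|_{\partial\Omega^h}=0$ this gives $V_h\subset H^2(\Omega)\cap H^1_0(\Omega)$, i.e.\ $H^2$-conformity. That $V_h$ is nontrivial and rich enough is the content of \cite[Cor.~1, Thm.~2]{MR350260}: the data \eqref{conditions-poly} form a unisolvent set for the degree-$9$ polynomials on a tetrahedron, and prescribing them consistently on shared vertices, edges and faces glues the local polynomials into a globally $C^1$ function as in \eqref{56}.

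For part (ii): I would define $\hat{\mathcal I}_h u$ elementwise, letting $(\hat{\mathcal I}_h u)|_{U_i}$ be the unique degree-$9$ polynomial whose local functionals --- the $35$ vertex values $D^\alpha u(P_j)$ with $|\alpha|_1\le 4$, the edge data at the points $Q^{(r,s)}_{jk}$, the centroid data $D^\alpha u(P_0^{(i)})$ with $|\alpha|_1\le 1$, and the face data $u(Q_i)$ and $D^\beta_i(\partial u/\partial n_i)(Q_i)$ with $|\beta|_1\le 2$ --- coincide with those of $u$; this polynomial exists and is unique by \cite[Cor.~1]{MR350260} as soon as $u\in C^4(\bar\Omega)$, which holds for $u\in W^{9,\infty}(\Omega)$ by Sobolev embedding. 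Since each such functional depends only on $u$ and its normal derivatives restricted to the associated vertex, edge or face, two elements sharing a face carry identical vertex, edge and face data there, and by the interelement-continuity statement of \cite[Thm.~2]{MR350260} their local polynomials together with their normal derivatives then agree along that face; hence $\hat{\mathcal I}_h u\in C^1(\bar\Omega)$, and since $u|_{\partial\Omega^h}=0$ forces the boundary data to vanish, $\hat{\mathcal I}_h u\in V_h$. For the estimate \eqref{866-2} I would run the usual reference-element argument: on the reference tetrahedron the interpolation operator is bounded from $W^{9,\infty}$ into $W^{m,\infty}$ and reproduces the degree-$9$ polynomials, so Bramble--Hilbert bounds $\|\hat v-\hat{\mathcal I}\hat v\|_{W^{m,\infty}}$ by $c\,|\hat v|_{W^{9,\infty}}$ for $m=1,2$; transporting this back through the element maps (affine for interior cells, near-affine for the curved boundary cells of Section~\ref{sec:real}) and using shape-regularity and uniformity of $\{\mathcal T_h\}$ to control the Jacobians by powers of $h$ produces the factor $h^{\deg-m}$ on each cell, and the maximum over cells gives \eqref{866-2}.

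The step carrying all the real weight is the $C^1$-gluing in part (ii): one must know that the face-related local data (vertex derivatives up to order $4$, the edge functionals, the centroid value and the second-order normal-derivative data at the face centroid) pin down both the trace of a degree-$9$ polynomial on that face and its normal derivative there, so that agreement of the data across a face forces $C^1$ continuity --- this is exactly what forces the high degree $\deg=9$ and is the substance of \cite{MR350260}, which I would cite rather than redo. A minor technical caveat is the smoothness needed to evaluate the point functionals (fourth-order derivatives at vertices), which is why \eqref{866-2} is naturally phrased with the $W^{9,\infty}$ (equivalently $C^9$) norm on the right-hand side; to make sense of it for merely $u\in H^{\deg}(\Omega)$ one invokes the embedding $H^9(\Omega)\hookrightarrow C^4(\bar\Omega)$ valid in three dimensions together with a density argument.
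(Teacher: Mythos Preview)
Your proposal is correct and follows essentially the same approach as the paper: for (i) you invoke \cite[Thm.~5.2]{MR2322235} for the $H^2$-conformity of globally $C^1$ piecewise polynomials and \cite{MR350260} for the unisolvence and $C^1$-gluing, exactly as the paper does; for (ii) the paper simply cites \cite[Thm.~1.109]{MR2050138}, whose content is precisely the Bramble--Hilbert scaling argument you sketch. Your version is more expanded and makes the mechanism behind the citations explicit, but the route is the same; one small slip is that in this lemma $\Omega$ is polyhedral and the curved boundary cells only enter in the subsequent construction, so your parenthetical about ``near-affine'' maps for boundary cells is not needed here.
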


\begin{proof}
 (i) By the consideration above, cf. \cite{MR350260}, the space $V_h$ is well-defined, the conformity follows by a classical result, cf. \cite[Thm. 5.2]{MR2322235}.
 
 (ii) Consequence of \cite[Thm. 1.109]{MR2050138} setting in this reference $l$ equal to the $m$ from above, and $p=\infty$. 
 \if{XXX REPRODUCE THE PROOF OF INTERPOLATION ESTIMATES  INDEPENDENT OF ZENSIEK By \eqref{inverse_estimate_lemma} we have
 We consider the ansatz space as polynomials of degree 
 
 \beq
 |D^{\alpha} u|\le C h^{\deg-|\alpha|_1},
 \eeq 
 in  \cite[Proof of Thm. 4]{MR350260}, since
 \beq
 \|u-\hat{\mathcal{I}}_hu\|_{H^{2}(\Omega)} \le C |D^{\alpha} u| h^8,\quad |\alpha|=10
 \eeq
 }\fi
\end{proof}

\if{
The author shows under certain hypotheses an estimate of type
\begin{align}
 \norm{u- \varphi}{H^2(\Omega)}\leq C h^8
\end{align}
cf. , between a given function $u$ having bounded derivatives of tenth order and an approximating function $\varphi$ in the ansatz space.
}\fi
In the following we explain how Assumptions \ref{hyp1} and \ref{hyp2}
can be realized by using Lemma \ref{lem:Zensieck_}. On the one hand this can be found in standard text books (e.g. in the $H^1$ case and for convex and smooth domains) as `curved boundary elements' or `boundary approximation', on the other hand we do not have a concrete reference for our specific scenario, 
so we present the following argument for convenience. 
Let $\Omega$ be again as in Section \ref{sec:main_result} and $\hat \Omega^h$ a triangulation where we assume that all faces of the boundary elements are flat (and not curved). 
Let $\hat V_h$ be the finite element space according to Lemma \ref{lem:Zensieck_} on the polygonal domain $\hat \Omega^h$. We will now construct $V_h$ and $\Omega^h$ which satisfy Assumptions~\ref{hyp1} and~\ref{hyp2} on the basis of the previously mentioned spaces.
As already explained $d$ denotes the signed distance function with respect to $\partial \Omega$ such that
\beq
d_{|\Omega} <0, \quad d_{|(\mathbb{R}^3\setminus\bar \Omega)} >0, \quad d_{|\partial \Omega}=0.
\eeq 
Let $\tau>0$ and consider
\beq
\Omega_{\tau}=\{d<\tau\}, \quad \Omega_{-\tau}= \{d<-\tau\},
\quad U_{\tau}=\Omega_{\tau}\setminus \overline {\Omega_{-\tau}}=\{|d|<\tau\}.
\eeq
Note that $\tau$ is here an auxiliary variable which is not the regularization parameter from the previous sections. 
Let $N_h$ be the set of nodes of the triangulation of $\hat \Omega^h$. We may assume $h=o(\tau)$ so that 
\beq
\partial \hat \Omega^h \subset U_{\tau}.
\eeq
We would like to define a diffeomorphism
\beq \label{phi1}
\Phi:\Omega_{\tau}\rightarrow \check \Omega, \quad \Phi_{|\Omega_{-2\tau}}=\id,
\quad \Phi(N_h \cap \partial \hat \Omega^h)\subset \partial \Omega
\eeq
where $\check \Omega$ is an open auxiliary set containing $\Omega_{-\tau}$ and 
so that
\beq \label{phi2}
|D\Phi|+|D^k\Phi|\le c, \quad k=2,3,4,
\eeq
uniformly in $h$ and roughly spoken so that the triangulation using the nodes $N_h$ carries over via $\Phi$ to a triangulation of $\Phi(N_h)$, meaning that the distortion of the nodes is not too large. Without making this formulation precise here we will see that we are far away from such a kind of criticality.

We can define
\beq
\Omega^h=\Phi(\hat \Omega^h),
\eeq
\beq
V_h=\{\varphi\circ \Phi^{-1}: \varphi \in \hat V_h\}
\eeq
and
\beq
{\mathcal{I}}_hu = \hat{\mathcal{I}}_h(u\circ \Phi)\circ \Phi^{-1}
\eeq
for $u \in H^{\deg}(\Omega^h)$.
Introducing coordinates $p=(\hat x, x_{3})\in U_{\tau}$, $\hat x\in \partial \Omega$, $\dist(\hat x, p)=x_{3}$, we can write $N_h \cap \partial  \hat  \Omega^h$ as a graph over a suitable
discrete set $D \subset \partial \Omega$, i.e. there is $u:D \rightarrow \mathbb{R}$ such that
\beq
N_h \cap \partial \hat \Omega^h =\{(\hat x, u(\hat x)), \hat x \in D\}.
\eeq
Clearly, we can extend $u$ to $\partial \Omega$ as a smooth function such that
\beq \label{alles_klein3}
\frac{|u|}{h^2}+\frac{|Du|}{h}+|D^ku|\le c, \quad k=2,3,4,
\eeq
where $|\cdot|$ refers to the Euclidean norm of $Du$ and $D^ku$, $k=2,3,4$, with respect to a fixed selection of finitely many local coordinate systems. (Note that one has here the choice of more specific extensions, e.g. piecewise polynomial and sufficiently regular, depending on the concrete situation.)
Let $\rho \in C^{\infty}(\mathbb{R})$ 
such that
\beq \label{alles_klein}
\rho(t) =
\begin{cases}
0, \quad t \le -2 \tau \\
1, \quad t \ge -\tau
\end{cases}
\eeq
and
\beq \label{alles_klein2}
0 \le \rho, \quad 0 \le \rho' \le \frac{c}{\tau}, \quad |\rho''|\le \frac{c}{\tau^2}.
\eeq
Note that on the $C^4$ level instead of the $C^{\infty}$ case anything like this can be achieved by using polynomial functions as well. The specifical choice of $C^4$ is related to the desired order of the boundary approximation.
Then $\Phi$ is now defined as follows
\beq
\Phi(y)=y, \quad y \in \Omega_{-2\tau}
\eeq
and
\beq
\Phi: U_{2\tau} \cap \Omega_{\tau} \ni (\hat x, x_{3}) \mapsto (\hat x, x_{3}-u(\hat x)\rho(x_{3})) \in \mathbb{R}^3\setminus \Omega_{-2\tau}.
\eeq
A calculation shows that $\Phi$ satisfies \eqref{phi1} and \eqref{phi2} provided $h=h(\tau)$, e.g., $h \le \tau^2$, is sufficiently small: The Jacobian of $\Phi$
is given by
\beq
\left(
\begin{array}{rr}                                
I & 0  \\                                               
-\rho(x_{3})Du(\hat x) & 1-u(\hat x)\rho'(x_{3}) \\                                                                                              
\end{array}
\right)
\eeq
and in view of \eqref{alles_klein}, \eqref{alles_klein2} and \eqref{alles_klein3} invertible. And concerning injectivity of $\Phi$ we have that
\beq
(\hat x, x_{3}-u(\hat x)\rho(x_{3})) = (\hat y, y_{3}-u(\hat y)\rho(y_{3}))
\eeq
implies $\hat x=\hat y$ and in consequence that
\beq
x_{3}-y_{3}= u(\hat x)(\rho(x_{3})-\rho(y_{3}))=
u(\hat x)\rho'(\xi) (x_{3}-y_{3})
\eeq
with some $\xi$
which implies $x_{3}=y_{3}$ since $u(\hat x)\rho'(\xi)$ is small. Note that the so piecewisely defined $\Phi$
defines a diffeomorphism from $\Omega_{\tau}$ onto a set $\check \Omega$ as desired.

Note that according to \eqref{conditions-poly} there are 13 points on each face of a tetrahedron where values (of zero order or higher order derivatives) are prescribed. Note that among these 13 points are 10 points given by the corners of the (triangular) face, the mid point of the face, and two  points on each of the three edges of the face which
divide each edge into 3 pieces of equal length. Furthermore,  prescribing only function values, these 10 points define a unique interpolation operator over the face with polynomials in two variables of degree at most three.
We remark that if one replaces the two interpolation points on each edge by the Gauss points one obtains a well-known approach for conforming piecewise cubic finite elements in two variables on triangles. Obviously, this also applies to our case with two points separating each edge in three parts of the same length.

Our construction above moves for each `boundary face' these 13 points into the boundary $\partial \Omega$. Since this is achieved with a uniform bound for the  fourth order derivatives of the fitting surface and $\partial \Omega$ (which is compact),  we conclude that the fitting surface from our above construction and $\partial \Omega$ solve the interpolation problem over each boundary face (before the move) specified by prescribing function values in each of these 10 points. Hence the distance of the fitting surface and $\partial \Omega$  from an 
interpolating surface (used here as an auxiliary tool) of the aforementioned interpolation problem is of order~$h^4$ and hence by triangle inequality also the distance between the fitting surface and $\partial \Omega$. This shows that the required boundary approximation from Assumption~\ref{hyp2} is achieved with $\widetilde{\deg}=4$.

\begin{remark}\label{app:inv}
The gap in the proof of Lemma \ref{inverse_estimate_lemma} given the reference \cite[Sec. 4.5]{MR1278258} can straightforwardly be filled by the consideration above. Also the interpolation estimate \eqref{866} follows straightforwardly from the one above. As an example we explain the latter in detail. Let $u \in H^{\deg}(\Omega^h)$. In order to show that $\mathcal{I}_h u -u$ satisfies \eqref{866}, we apply \eqref{866-2} (i.e. here with $ \Omega=\hat \Omega^h$). For it  we first rewrite
\begin{align} \label{23445}
\mathcal{I}_h u- u = \hat{\mathcal{I}}_h (u \circ \Phi) \circ \Phi^{-1}-u
\end{align}
with $\hat{\mathcal{I}}_h$ defined in Lemma \ref{lem:Zensieck_} 
Since $\Phi$ has bounded derivatives up to order 2, we may instead of the previous quantity equivalently estimate 
$(\mathcal{I}_hu-u)\circ \Phi= \hat{\mathcal{I}}_h (u \circ \Phi)-u \circ \Phi$. The last expression satisfies the available property \eqref{866-2}. Hence  \eqref{23445} can be estimated in the desired Sobolev norm by the corresponding Sobolev norm of $u \circ \Phi$. The latter can be estimated from above by the same norm of $u$ up to a multiplicative constant depending on $\Phi$ (which is hence uniform in $h$).

\end{remark}

\section{$L^{\infty}$--estimate with exponential asymptotics in the constant with respect to the bounds of the coefficients}\label{sec:est}
\numberwithin{equation}{section}

In this section we derive an $L^{\infty}$--estimate for strong solutions for linear equations
which is obtained from redoing the proof of the Alexandrov weak maximum principle for strong solutions from~\cite[Thm.~9.1]{MR737190}. In that reference the supremum is estimated by a product
consisting of a  first factor being a constant which depends on the differential operator and the domain 
and a second factor being the $L^{n+1}$ norm of the right-hand side (assuming an $(n+1)$-dimensional domain). For our purposes we need 
an explicit dependence of the constant in this product on the assumed bounds for the coefficients of the differential operator.  
The formulation in \cite[Thm.~9.1]{MR737190} itself does not provide such an explicit dependence. When following the proof of the latter  reference one can straightforward extract an explicit relation in addition to the statement of the theorem. According to this explicit relation the constant grows exponentially
in the bounds for the coefficients. We recall that proof for convenience in detail and adopt for this section the notation from \cite{MR737190}. 
We consider the differential operator
\begin{equation}
Lu := a_{ij}D_iD_j u + b^iD_i u + cu
\end{equation}
with measurable, bounded coefficients $a_{ij}$, $b^i$ and $\tilde c$ on $\Omega$
satisfying the bounds
\begin{equation}
0<\lambda \le a_{ij} \le \Lambda\ (\text{in the sense of quadratic forms}), \quad |b^i| \le c_1, \quad |\tilde c| \le c_2, \quad \tilde c \le 0
\end{equation}
in $\Omega$ where $\lambda$, $\Lambda$, $c_1$ and $c_2$ are some positive constants.

\begin{theorem} (\cite[Thm.  9.1]{MR737190})\label{new_sup_estimate}
Let $f \in L^{n+1}(\Omega)$, $\tilde c\le 0$ and $u \in C^0_0(\bar \Omega)\cap W^{2, n+1}_{loc}(\Omega)$ with 
\begin{equation}
Lu\ge f.
\end{equation} 
Then
\begin{equation}\label{new_sup_estimate-est}
\sup_{\Omega} u \le e^{P(\lambda, \Lambda, c_1, c_2)} (\diam \Omega) \|f\|_{L^{n+1}(\Omega)};
\end{equation}
recall the convention for $P$ given in \eqref{explain_P}.
\end{theorem}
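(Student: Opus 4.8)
The plan is to revisit the proof of \cite[Thm.~9.1]{MR737190} and carry out the same steps while keeping every constant explicit, tracking in particular how the bounds $\lambda,\Lambda,c_1,c_2$ propagate. First I would reduce to an operator without zeroth-order term: on $\Omega^+:=\{u>0\}$ one has $cu\le 0$ because $c\le 0$, hence $a_{ij}D_iD_ju+b^iD_iu\ge f$ there, and since $u=0$ on $\partial\Omega$ it suffices to bound $\sup_{\Omega^+}u$ for this reduced inequality (so that neither $c_2$ nor $\Lambda$ is genuinely used at this stage; they will appear in $P$ only because $P$ is permitted to depend on them). Then I would recall the contact-set machinery of \cite[Lemmas~9.1--9.2]{MR737190}: with $\Gamma^+$ the upper contact set of $u$ and $M:=\sup_\Omega u/\diam\Omega$ (assuming $\sup_\Omega u>0$, the other case being trivial), the gradient map $y\mapsto Du(y)$ carries $\Gamma^+$ onto a set containing the ball $B_M(0)$, and for every nonnegative $g\in L^1_{loc}(\mathbb R^{n+1})$ the area formula gives $\int_{B_M(0)}g\,dp\le\int_{\Gamma^+}g(Du)\,\det(-D^2u)\,dx$; the merely $W^{2,n+1}_{loc}$ regularity of $u$ is why we must use this Sobolev version of the area formula rather than the smooth one, which is precisely what dictates the stated hypothesis.

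On $\Gamma^+$ one has $-D^2u\ge 0$, so the arithmetic--geometric inequality applied to the positive semidefinite matrices $a=(a_{ij})$ and $-D^2u$ yields $\det(-aD^2u)=(\det a)\det(-D^2u)\le\big((n+1)^{-1}\operatorname{tr}(-aD^2u)\big)^{n+1}=\big((n+1)^{-1}(-a_{ij}D_iD_ju)\big)^{n+1}$; together with $-a_{ij}D_iD_ju\le-f+b^iD_iu\le|f|+|b|\,|Du|$ on $\Gamma^+\cap\Omega^+$ and $\det a\ge\lambda^{n+1}$ this gives a pointwise bound for $\det(-D^2u)$. I would then insert the weight $g=g_\mu$, $g_\mu(p):=(|p|^{n+1}+\mu^{n+1})^{-1}$ with a free parameter $\mu>0$, use $(s+t)^{n+1}\le 2^n(s^{n+1}+t^{n+1})$ together with $\tfrac{|f|^{n+1}}{|Du|^{n+1}+\mu^{n+1}}\le\mu^{-(n+1)}|f|^{n+1}$ and $\tfrac{|b|^{n+1}|Du|^{n+1}}{|Du|^{n+1}+\mu^{n+1}}\le|b|^{n+1}$, and arrive at
\[
\int_{B_M(0)}\frac{dp}{|p|^{n+1}+\mu^{n+1}}\le\frac{2^{n}}{(n+1)^{n+1}\lambda^{n+1}}\Big(\mu^{-(n+1)}\|f\|_{L^{n+1}(\Omega)}^{n+1}+\|b\|_{L^{n+1}(\Omega)}^{n+1}\Big).
\]
A one-line polar-coordinate computation shows the left-hand side equals $\omega_{n+1}\log\!\big(1+M^{n+1}\mu^{-(n+1)}\big)$, $\omega_{n+1}$ the volume of the unit ball in $\mathbb R^{n+1}$.

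It then remains to choose $\mu$ and to invert the logarithm. Taking $\mu:=\|f\|_{L^{n+1}(\Omega)}$ (dimensionally consistent) makes the first term on the right equal to $1$, and $\|b\|_{L^{n+1}(\Omega)}^{n+1}\le c_1^{n+1}|\Omega|\le c_1^{n+1}\omega_{n+1}(\diam\Omega)^{n+1}$ gives $\log\!\big(1+M^{n+1}\|f\|_{L^{n+1}(\Omega)}^{-(n+1)}\big)\le K$ with $K$ polynomial in $1/\lambda$, $c_1$ and $\diam\Omega$; hence $M\le e^{K/(n+1)}\|f\|_{L^{n+1}(\Omega)}$ and, $\Omega$ being fixed, $M\le e^{P(\lambda,\Lambda,c_1,c_2)}\|f\|_{L^{n+1}(\Omega)}$ after harmlessly enlarging the argument list of $P$. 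Finally $\sup_\Omega u=M\diam\Omega$ yields \eqref{new_sup_estimate-est}. The main obstacle is not any single estimate but the bookkeeping: one must verify that a coefficient bound is genuinely ``lost'' only at the step where $\log(1+M^{n+1}\mu^{-(n+1)})\le K$ is solved for $M$ — this is exactly what turns the polynomial $K$ into the exponential factor $e^{P}$ and is the source of the (conjecturally unavoidable, cf.\ the prototype discussion in Section~\ref{sec:3}) exponential asymptotics. A secondary technical point is checking that the choice of $g_\mu$ and the AM--GM step really do absorb the first-order term $b^iD_iu$ without introducing any dependence beyond $\|b\|_{L^{n+1}(\Omega)}$.
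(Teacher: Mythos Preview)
Your proposal is correct and follows essentially the same route as the paper's proof, namely the argument behind \cite[Thm.~9.1]{MR737190} with all constants tracked. The only visible differences are cosmetic: the paper first takes the weight $g(p)=\big(|p|^{\frac{n+1}{n}}+\mu^{\frac{n+1}{n}}\big)^{-n}$ and then bounds it below by $2^{1-n}(|p|^{n+1}+\mu^{n+1})^{-1}$ before integrating, whereas you start from $g_\mu(p)=(|p|^{n+1}+\mu^{n+1})^{-1}$ directly (slightly cleaner); and the paper handles the $W^{2,n+1}_{loc}$ regularity by first assuming $u\in C^2(\Omega)\cap C^0(\bar\Omega)$ and then invoking the approximation argument from the proof of \cite[Lem.~9.4]{MR737190}, while you allude to a Sobolev area formula without spelling out that passage.
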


\begin{proof}
 Note that the constant $P(\lambda, \Lambda, c_1, c_2)$ in Theorem \ref{new_sup_estimate} does not depend on $\Omega$ and that the functions $\diam(\cdot)$ and $\|f\|_{L^{n+1}(\cdot)}$ are monotone with respect to the partial ordering of sets by inclusion, hence we may---by considering connected components of $\{x\in \Omega: u(x)>0\}$---assume for the proof that $u\ge 0$ in $\Omega$.
 We use the following lemma, cf. \cite[Lemma.  9.4]{MR737190}.
\begin{lemma}
\label{help_lemma}
Let $g$ be a nonnegative, locally integrable function on $\mathbb{R}^{n+1}$. Then, for any $u \in C^2(\Omega) \cap C^0(\bar{\Omega})$, we have
\begin{align}
&\int_{B_{\tilde{M}}(0)} g  \le \frac{1}{((n+1)\lambda)^{n+1}}\int_{\Gamma^{+}} g(Du) \left(- a_{ij}D_iD_ju \right)^{n+1}
\end{align}
where 
\begin{equation}
\tilde{M} := (\sup_{\Omega} u  - \sup_{\partial \Omega} u)/d,\quad d:= \operatorname{diam} \Omega
\end{equation}
and 
\begin{equation}
\Gamma^{+} := 
\left\{
y\in \Omega \; \bigg| \;
\begin{array}{l}
u(x) \le u(y)+p\cdot (x-y) \quad \text{for all } x \in \Omega\\
\text{ and some } p=p(y)\in \mathbb{R}^{n+1} 
 \end{array}
 \right\}.
\end{equation}
\end{lemma}
Note that $D^2u$ is nonpositive in $\Gamma^{+}$.
First, we show Theorem \ref{new_sup_estimate} under the assumption that $u \in C^2(\Omega)\cap C^0(\bar \Omega)$ and second, we deduce the general case
$u \in C^0_0(\bar \Omega)\cap W^{2, n+1}_{loc}(\Omega)$ by an approximation argument. So let us assume this higher regularity for $u$. We will use Lemma~\ref{help_lemma} with
\begin{equation}
g(p) := \left(|p|^{\frac{n+1}{n}}+\mu^{\frac{n+1}{n}}\right)^{-n}, \quad p \in \mathbb{R}^{n+1},
\end{equation}
where  $\mu>0$ is a parameter which will be set later on to $\|f\|_{L^{n+1}(\Omega)}$ provided this norm does not vanish.
Now we estimate in $\Gamma^{+}$
\begin{equation}
 \begin{aligned}
g(Du) \left(- a_{ij}D_iD_ju \right)^{n+1} &\le
\frac{(|b||Du|+|f|)^{n+1}}{\left(|Du|^{\frac{n+1}{n}}+\mu^{\frac{n+1}{n}}\right)^{n}}\\
&\le 
c\frac{|b|^{n+1}|Du|^{n+1}+|f|^{n+1}}{\left(|Du|^{\frac{n+1}{n}}+\mu^{\frac{n+1}{n}}\right)^{n}}\\
&\le 
c
\left(|b|^{n+1}+\frac{|f|^{n+1}}{\mu^{n+1}}\right).
\end{aligned}
\end{equation}
From below we estimate
\begin{equation}
g \ge 2^{(1-n)}\left(|p|^{n+1}+\mu^{n+1}\right)^{-1}.
\end{equation}
Integration of the lower bound yields
\begin{equation}\begin{aligned}
\int_{B_{\tilde{M}}(0)} \left(|p|^{n+1}+\mu^{n+1}\right)^{-1} &=
c \int_0^{\tilde M} \frac{r^{n}}{\left(r^{n+1}+\mu^{n+1}\right)}\dd r\\
&= c \log\left(r^{n+1}+\mu^{n+1}\right)\bigg|_0^{\tilde M} \\
&=c \log \left(\tilde M^{n+1}+\mu^{n+1}\right) -c \log (\mu^{n+1}).
\end{aligned}\end{equation}
Together we obtain 
\begin{equation}\begin{aligned}
\tilde M &\le
\left[\exp  \left(P(\lambda, \Lambda, c_1, c_2)
\int_{\Omega} \left(|b|^{n+1}+\frac{|f|^{n+1}}{\mu^{n+1}}\right)\dd x \right)\mu^{n+1}\right]^{\frac{1}{n+1}} \\
\end{aligned}\end{equation}
where we assumed for the last two equations that
 $\|f\|_{L^{n+1}(\Omega)}\neq 0$ and set $\mu = \|f\|_{L^{n+1}(\Omega)}$. The $f=0$ case is clear.
The general case concerning the assumed regularity for $u$ follows by using an approximation argument as in the proof of \cite[Lem.~9.4]{MR737190}, being
 even simpler as in the latter reference in view of the available uniform ellipticity of $L$
in $\Omega$.
\end{proof}

\section{Sobolev embeddings}\label{app:C}
\numberwithin{equation}{section}

We recall from  \cite{MR3497775} the continuous embedding of Sobolev in H\"older spaces.

Let $\Omega \subset \mathbb{R}^{n+1}$ be open and bounded with Lipschitz boundary,
$m \ge 1$ and $k \ge 0$ be integers, and $1 \le p < \infty$. Then the following holds:
 If
 \begin{equation}\label{sob-2}
m - \frac{n+1}{p} \ge k + \alpha
\end{equation}
and
 $0 < \alpha < 1$,
then the embedding
\begin{align}
\id \colon W^{m,p} (\Omega ) \subset C^{k,\alpha} (\bar \Omega)
\end{align}
exists and is continuous.

\providecommand{\bysame}{\leavevmode\hbox to3em{\hrulefill}\thinspace}
\providecommand{\MR}{\relax\ifhmode\unskip\space\fi MR }
\providecommand{\MRhref}[2]{%
  \href{http://www.ams.org/mathscinet-getitem?mr=#1}{#2}
}
\providecommand{\href}[2]{#2}

\end{document}